\newtheorem{thm}{Theorem}[section]
\newtheorem{lem}[thm]{Lemma}
\theoremstyle{plain}
\newtheorem{cor}[thm]{Corollary}
\newtheorem{defi}[thm]{Definition}
\newtheorem{remk}[thm]{Remark}
\newtheorem{exa}[thm]{Example}
\begin{document}
		\title {On the $m$-graph of a finite Abelian Group}
		\author{ Ayman Badawi}
	\address{department of Mathematics  $\&$ Statistics, The American University of Sharjah, P.O.  Box 26666, Sharjah, United Arab Emirates.}
	\email{abadawi@aus.edu}
	\date{\today}
		\subjclass[2020]{Primary 13A15; Secondary 13B99, 05C99.}
		\keywords{graphs from rings, graphs from groups, trees, diameter, power graph}

\begin{abstract} Let $H$ be a finite abelian (commutative) group of order $n \geq 2$, and $m >1$ be an integer. We define the $m$-graph of $H$, denoted by $m-G(H)$, as a simple undirected graph with vertex set $H$, and two distinct vertices, $a, b \in H$, are connected by an edge if and only if $a^m = b$ or $b^m = a$. Several results regarding the properties of the $m$-$G(H)$ have been established. 
\end{abstract}
\maketitle
\section{Introduction}
 In the last thirty years or so, there has been considerable attention to graphs from rings; for example, see \cite{B1}, which gives an overview of three decades of research in the subject.  Graphs from groups are also well-studied, and many research papers are devoted to them.  For example, see \cite{AS} and \cite{T1}.  This paper introduces a new graph related to the power graph as in \cite{K1} and \cite{C1}, but with a completely different structure and properties. Let $G$ be a finite group. Then the {\it directed power graph} of $G$ as in \cite{K1}, is a digraph with vertex set $G$ and a vertex $a\in G$ is adjacent to a vertex $b\in G$ if $b = a^r$ for some integer $ r\geq 1$. This concept was extended in \cite{C1} to an undirected power graph. We recall from \cite{C1} that the undirected power graph of $G$ or simply the {\it power graph} of $G$, denoted by $P(G)$, is a simple undirected graph with vertex set $G$ and two distinct vertices $a, b \in G$ are adjacent (i.e. connected by an edge) if $a = b^r$ or $b = a^r$. Power graphs of groups studied extensively by many authors, for example, see \cite{A1}, \cite{C0}, \cite{C1}, \cite{K1}, \cite{K2}, \cite{P1}, \cite{P2}, \cite{P3}, \cite{P4}, and \cite{T1}. Excellent survey articles with extensive lists of references on power graphs are \cite{A1} and \cite{C0}. Let $G$ be a finite abelian group of order $n \geq 2$. Then it is easy to see that the $P(G)$ is connected and the diameter of $P(G) \leq 2$. The authors in \cite[Theorem 4]{T1} proved that the $P(G)$ is a tree if and only if $P(G) = K_{1, n-1}$ if and only if every element in $G$ is its inverse.  The graph of $G$ that we will introduce in this paper is not always connected; if it is, it is a tree, and its diameter is an integer $1 \leq d < \infty$.  
 
 In this paper, we introduce and study the $m$-graph of a finite abelian (commutative) group. Let $H$ be a finite abelian group of order $n \geq 2$, $m > 1$, and $k = gcd(m, n) $.  We define the {\it $m$-$G(H)$ } to be a simple undirected graph with vertex set $H$ and two distinct vertices, a, b, are adjacent (i.e., connected by an edge) if $b = a^m$ or $a = b^m$.  We show (Theorem \ref{T1}) that the $m$-$G(H)$ is connected if and only if every prime factor of $n$ is a prime factor of $m$, and hence every prime factor of $n$ is a prime factor of $k$. We show (Theorem \ref{T3} and Theorem \ref{T5}(7)) if the $m$-$G(H)$ is connected, then it is a tree.  We show (Theorem \ref{Tciso} and Theorem \ref{TNCiso}) if the $m$-$G(H)$ is connected, then the $m$-$G(H)$ is graph-isomorphic to the $k$-$G(H)$. If the $m$-$G(H)$ is connected, we compute (Theorem \ref{T2} and Theorem \ref{T5}) the degrees of its vertices, and we compute (Theorem \ref{CDIM} and Theorem \ref{NCDIM}) its diameter. Many examples are provided. We show (Theorem \ref{Tree1} and Theorem \ref{Tree2}) that certain trees can be represented as the $m$-$G(H)$ of a cyclic group $H$ for some $m \geq 2$. This paper is devoted to studying graph properties and group structures of the connected $m$-$G(H)$.
 
 We recall some definitions from graph theory.  Let $G$ be a simple undireted graph with vertex set $V$. Then the {\it distance} between two distinct vertices $a, b$ $\in V$, denoted by $d(a, b)$, is the length of the shortest path between $a$ and $b$; if there is no such path, then we define $d(a, b) = \infty$. The {\it diameter} of $G$, abbreviated as $diam$($G$), is max$\{d(a, b) \mid a, b \in V\}$. The graph $G$ is called {\it connected} if there is a path between every two distinct vertices of $G$, and it is {\it disconnected} if there are two distinct vertices of $G$ that are not connected by a path in $G$. The graph $G$ is called a {\it tree} if it is connected and has no cycles. The graph $G$ is called a {\it complete bipartite} graph, denoted by $K_{n, m}$, If its set of vertices can be split into two sets $A$ and $B$, such that every two vertices in $A$ and every two vertices in $B$ are not adjacent, still, every vertex in $A$ is adjacent to every vertex in $B$, $|A| = n$, and $|B| = m$.  The {\it chromatic} number of $G$  is the smallest number of colours needed to colour the vertices of $G$ so that no two adjacent vertices receive the same colour. Let $v$ be a vertex of $G$. Then the {\it degree} of $v$, abbreviates as $deg(v)$, is the number of the vertices that are adjacent to $v$. 
 \begin{remk}
 Assume that $G$ is a tree. It is known that between any two distinct vertices of $G$, there is exactly one path. Hence, $diam$($G$) is the length of the longest path of $G$. 
 \end{remk}
 
\section{General results and examples}
\begin{exa}\label{E1}
The following are graphs of the $2-G(Z_4)$ and the $2-G(Z_6)$. Note that the first graph is connected while the second is disconnected.

\includegraphics[height = 5cm, width = 12cm]{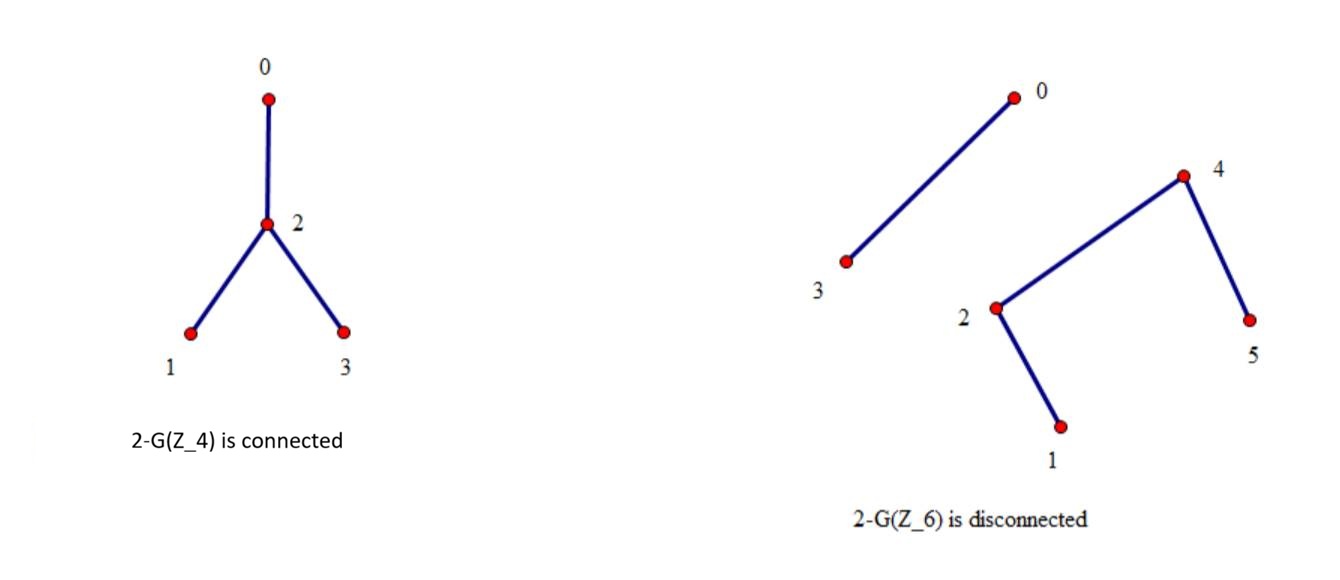}

\end{exa}
In view of example \ref{E1}, we have the following result.
\begin{thm}\label{T1}
	Let $H$ be a finite abelian group of order $n \geq 2$, $m > 1$ be an integer, and $k = gcd(m, n)$. The following statements are equivalent.
	\begin{enumerate}
		\item $m-G(H)$ is connected.
		\item $k\mid n$ and if $p$ is a prime factor of $n$, then $p \mid k$ (and hence $p \mid m$).
	\end{enumerate}
\end{thm}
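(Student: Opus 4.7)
The plan is to study the $m$-th power map $\phi : H \to H$, $\phi(x) = x^m$, and note that (modulo self-loops) the $m$-graph is the underlying undirected graph of the functional digraph of $\phi$. Let $T := \{y \in H : y^{m^j} = e \text{ for some } j \geq 0\}$; since $H$ is abelian this is a subgroup, and by the primary decomposition $T = \bigoplus_{p \mid m} H_p$ where $H_p$ denotes the Sylow $p$-subgroup. The heart of the argument is to show that the connected component of the identity $e$ in the $m$-graph equals $T$.

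The inclusion $T \subseteq \mathrm{comp}(e)$ is immediate: if $y^{m^j} = e$, then $y, y^m, y^{m^2}, \ldots, y^{m^j} = e$ is a path in the $m$-graph, since each consecutive pair is adjacent by the defining edge relation.

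For the reverse inclusion I would induct on path length. Suppose $e = v_0, v_1, \ldots, v_k = y$ is a path and, by induction, $v_{k-1} \in T$, say $v_{k-1}^{m^j} = e$. Adjacency of $v_{k-1}$ and $y$ yields either $y = v_{k-1}^m$ or $y^m = v_{k-1}$. In the first case, $y^{m^{j-1}} = v_{k-1}^{m^j} = e$ provided $j \geq 1$ (the degenerate case $j = 0$ forces $v_{k-1} = e$, hence $y = e \in T$); in the second case $y^{m^{j+1}} = v_{k-1}^{m^j} = e$. Either way $y \in T$. This careful handling of the two adjacency directions is the main delicate step, since an arbitrary path may zigzag between ``forward'' and ``backward'' edges of $\phi$.

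Having established $\mathrm{comp}(e) = T$, the graph is connected iff every vertex lies in $\mathrm{comp}(e)$, iff $T = H$. By the Sylow description of $T$ above, $T = H$ iff $H_p = \{e\}$ for every prime $p \mid n$ with $p \nmid m$, iff every prime factor of $n$ divides $m$. Finally, $k = \gcd(m,n)$ divides $n$ automatically, and for a prime $p \mid n$ one has $p \mid k$ iff $p \mid m$; so the above condition is exactly statement (2) of the theorem.
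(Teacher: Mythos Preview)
Your argument is correct, and it takes a genuinely different route from the paper. The paper argues each implication directly: for $(2)\Rightarrow(1)$ it observes, as you do, that any $a$ reaches $0$ along the forward orbit $a, a^m, a^{m^2},\dots$; for $(1)\Rightarrow(2)$ it picks an element $a$ of prime order $p$ with $p\nmid m$ (Cauchy's theorem) and asserts that a path from $a$ to $0$ would have to look like $a-a^m-\cdots-a^{m^w}=0$, forcing $p\mid m^w$. That last assertion is stated without justification and is exactly the ``zigzag'' issue you flag: a priori a path could use backward edges $b^m=a$, so the paper's argument is somewhat informal at this point.

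Your approach replaces this with a structural identification: you show $\mathrm{comp}(e)=T=\bigoplus_{p\mid m}H_p$ and then read off connectedness as $T=H$. The induction on path length, splitting into the two adjacency cases, is precisely what fills the gap in the paper's $(1)\Rightarrow(2)$ argument, and the Sylow description of $T$ makes the final equivalence transparent. The trade-off is that you invoke the primary decomposition of a finite abelian group, whereas the paper only needs Cauchy's theorem; on the other hand, your proof yields the extra information that in the disconnected case the component of the identity is exactly the subgroup $\bigoplus_{p\mid m}H_p$. One small cosmetic point: your sequence $y,y^m,\dots,y^{m^j}=e$ is in general a walk rather than a simple path (consecutive terms could coincide), but this is harmless for establishing that $y$ lies in $\mathrm{comp}(e)$.
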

\begin{proof} $(1)\Rightarrow (2)$. Since $k = gcd(m, n)$, it is clear that $k \mid n$. Assume the $m-G(H)$ is connected. Let $p$ be a prime factor of $n$ such that $p\nmid k$. Then we know that $H$ has an element, say $a$, such that the order of $a$ is $p$. We show there is no path from $a$ to $0$ (the identity of $H$). Assume there is a path $P$ from $a$ to $0$. Then
	$$P: a--a^m--a^{m^2}-- \cdots -- a^{m^w} = 0$$ for some positive integer $w \geq 1$. Thus $p \mid m^w$, a contradiction, since $p \nmid m$. Thus, every prime factor of $n$ is a prime factor of $k$ (and hence a prime factor of $m$).
	
	$(2)\Rightarrow (1)$. Assume that $k\mid n$ and if $p$ is a prime factor of $n$, then $p \mid k$. We show that $m-G(H)$ is connected. It suffices to show that every vertex $a \in H$ is connected by a path to the identity $0$ of $H$. Let $a \in H$ such that $a \not = 0$. Let $d \geq 2$ be the order of $a$ in $H$, i.e., $d$ is the smallest positive integer such that $a^d = 0$ in $H$. Since every prime factor of $n$ is a prime factor of $m$, let $w \geq 1$ be the least positive integer such that $m^w = df$ for some integer $f\geq 1$, and hence $a^{m^w} = 0$ in $H$. Thus $P : a--a^m--a^{m^2} \cdots -- a^{m^w} = 0$ is a path from $a$ to $0$. Hence, the $m-G(H)$ is connected.
	
\end{proof}

\begin{remk} Since $3 \mid 6$ and $3\nmid 2$, by Theorem \ref{T1}, the $2-G(Z_6)$ in example \ref{E1} is disconnected.
	
	\end{remk}

In light of Theorem \ref{T1}, we have the following result.
\begin{cor}\label{C1}
	Let $H$ be a finite abelian group of order $n$, where $n >1 $ is a square-free integer. The following statements are equivalent.
	\begin{enumerate}
		\item The $m-G(H)$ is connected for some integer $m > 1$.
		\item $m = fn$ for some integer $f \geq 1$.
	\end{enumerate}
\end{cor}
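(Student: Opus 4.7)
The plan is to derive this corollary directly from Theorem \ref{T1}, using the square-free hypothesis on $n$ to collapse the prime-by-prime divisibility condition into a single divisibility of the form $n \mid m$.

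First, since $n > 1$ is square-free, I would write $n = p_1 p_2 \cdots p_r$ as a product of distinct primes. Theorem \ref{T1} already states that, for any integer $m > 1$, the $m$-$G(H)$ is connected if and only if every prime factor of $n$ is a prime factor of $m$, equivalently $p_i \mid m$ for each $i = 1, \ldots, r$. Because the $p_i$ are pairwise coprime, this simultaneous divisibility is equivalent to $n = p_1 p_2 \cdots p_r \mid m$, which is exactly the assertion $m = fn$ for some integer $f \geq 1$.

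The two implications then follow immediately. For $(1) \Rightarrow (2)$, assume the $m$-$G(H)$ is connected for some $m > 1$; Theorem \ref{T1} yields $p_i \mid m$ for every $i$, and coprimality of the $p_i$ forces $n \mid m$, giving $m = fn$ with $f \geq 1$. For $(2) \Rightarrow (1)$, if $m = fn$ then $n \mid m$, so trivially every prime factor of $n$ divides $m$, and Theorem \ref{T1} delivers connectedness of $m$-$G(H)$.

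I do not anticipate any real obstacle here. The entire content beyond Theorem \ref{T1} is the elementary observation that, when $n$ is square-free, ``every prime factor of $n$ divides $m$'' is equivalent to ``$n$ divides $m$''; this is precisely what distinguishes the corollary from the theorem it specializes.
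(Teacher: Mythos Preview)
Your proof is correct and follows essentially the same approach as the paper: both directions invoke Theorem \ref{T1} and use the square-free hypothesis to pass between ``every prime factor of $n$ divides $m$'' and ``$n \mid m$''. The only cosmetic difference is that in $(2)\Rightarrow(1)$ the paper phrases things via $k = \gcd(m,n) = n$ before applying Theorem \ref{T1}, whereas you go straight to the prime-divisibility criterion; the content is identical.
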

\begin{proof}
$(1)\Rightarrow (2)$. Assume the $m-G(H)$ is connected for some integer $m > 1$. Then every prime factor of $n$ is a prime factor of $m$ by Theorem \ref{T1}. Since $n$ is square-free, we conclude that $m = fn$ for some integer $f \geq 1$.

$(2) \Rightarrow (1) $.  Assume that $m = fn$ for some integer $f \geq 1$. Then $k = gcd(m, n) = n$, and every prime factor of $n$ is a factor of $k$. Hence the $m-G(H)$ is connected by Theorem \ref{T1}.

	\end{proof}
The following is the $24-G(Z_6)$.
\begin{exa}\label{E2}
	Let $H = Z_6$ and $m = 24$, note that 6 is a square-free integer and $m = (4)(6) = 4n$. Hence, the $24-G(Z_6)$ is connected by Corollary \ref{C1}. 
	
	\includegraphics[height = 5cm, width = 10cm]{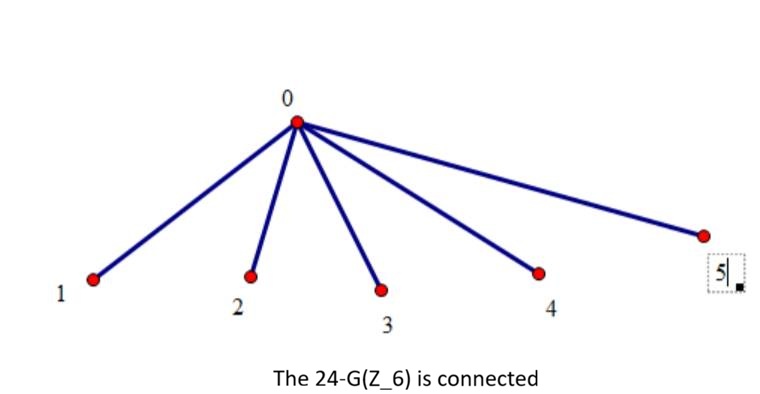}

\end{exa}
The following result is needed.
\begin{lem}\label{L1}
	Let $n, m > 1$ be integers and assume that the equation $mx = a$ has a solution in $Z_n$ for some $a\in Z_n$. Let $k = gcd(m, n) $. Then $k \mid a$ in $Z$ and the equation $mx = a$ in $Z_n$ has exactly $k$ distinct solutions in $Z_n$.  Furthermore, assume that every prime factor of $n$ is a prime factor of $m$ and $a \not = 0$. Then $ma \not = a$ in $Z_n$.	
	\end{lem}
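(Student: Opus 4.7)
The first two assertions are classical facts about linear congruences, so I would just sketch the standard arguments. For $k \mid a$: the hypothesis says $mx \equiv a \pmod{n}$ for some $x$, i.e., there is an integer $q$ with $mx + nq = a$; since $k = \gcd(m,n)$ divides both $m$ and $n$, it divides the left side, hence divides $a$. For the count of solutions, I would write $m = km'$, $n = kn'$, $a = ka'$ with $\gcd(m', n') = 1$, reduce the congruence $mx \equiv a \pmod n$ to $m'x \equiv a' \pmod{n'}$, invert $m'$ modulo $n'$ to get a unique $x_0 \pmod{n'}$, and observe that lifting back to $Z_n$ produces exactly the $k$ residues $x_0, x_0 + n', x_0 + 2n', \dots, x_0 + (k-1)n'$.

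The third assertion is the only part with real content, and I would prove it by contradiction. Suppose every prime factor of $n$ is a prime factor of $m$, $a \neq 0$ in $Z_n$, and $ma = a$ in $Z_n$. Then $n \mid (m-1)a$ in $Z$. Let $d$ be the additive order of $a$ in $Z_n$, so $d = n/\gcd(n,a) > 1$ because $a \neq 0$. From $n \mid (m-1)a$ I would deduce $d \mid m-1$. Now choose any prime $p$ dividing $d$; since $d \mid n$, $p$ is a prime factor of $n$, so by hypothesis $p \mid m$. But $p \mid d \mid m-1$, whence $p \mid m - (m-1) = 1$, a contradiction. Therefore $ma \neq a$ in $Z_n$.

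The main (and only) obstacle is keeping the quantifier order clean in part three: one must pass from the assumption on prime factors of $n$ to a statement about the order $d$ of $a$, and this hinges on the observation $d \mid n$. Everything else is bookkeeping with the definition of $\gcd$ and the elementary theory of congruences.
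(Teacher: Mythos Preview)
Your proof is correct. For the first two assertions you actually give more detail than the paper, which simply invokes ``a basic course in number theory.'' For the third assertion, both you and the paper argue by contradiction from $n \mid (m-1)a$, and both rely on the key observation that a prime dividing $m$ cannot divide $m-1$. The only difference is packaging: the paper notes directly that $\gcd(m-1,n)=1$ (since every prime factor of $n$ divides $m$) and then cancels to get $n \mid a$, while you route through the additive order $d$ of $a$, show $d \mid m-1$, and then pick a prime of $d$ to reach the contradiction. The paper's version is marginally slicker because it avoids introducing $d$, but the underlying idea is identical.
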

	\begin{proof}
		It is known from a basic course in number theory that $mx \equiv a \ \ (mod \ \ n)$ has a solution if and only if  $k = gcd(m, n) \mid a$ in $Z$, and if it has a solution, then it has exactly $k$ distinct solutions in $Z_n$. Assume that every prime factor of $n$ is a prime factor of $m$, and $ma  = a$ in $Z_n$. Then $n \mid a(m-1)$ in $Z$. Since every prime factor of $n$ is a prime factor of $m$, we conclude that $gcd(m-1, n) = 1$. Thus $n |a $, and hence $a = 0$ in $Z_n$.
		
		\end{proof}

\section{Properties of the $m$-$G(H)$ when $H$ is cyclic}

	We have the following result.
		
	\begin{thm}\label{T2}
		Let $H$ be a finite cyclic group of order $n > 1$, i.e., $H = Z_n$, and assume the $m-G(H)$ is connected for some integer $m > 1$. Let $k = gcd(m, n)$ and $a \in Z_n$. Then
		\begin{enumerate}
			\item $deg(0) = k - 1$.
			\item If $a \not = 0$  and $k \nmid a$ in $Z$, then $deg(a) = 1$. 	
			\item If $a \not = 0$ and $k \mid a$ in $Z$, then $deg(a) = k + 1$.
	\end{enumerate}
\end{thm}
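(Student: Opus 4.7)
The plan is to work in additive notation on $Z_n$, so that the adjacency $b = a^m$ or $a = b^m$ becomes $b = ma$ or $a = mb$ in $Z_n$. Then the neighbors of a vertex $a$ are exactly the vertices in $\{ma\} \cup \{b \in Z_n : mb = a\}$ minus $a$ itself. With this setup, the three parts reduce to counting solutions of linear congruences, which is exactly what Lemma \ref{L1} supplies, combined with the hypothesis from Theorem \ref{T1} that every prime factor of $n$ divides $m$ (since $m$-$G(H)$ is connected).

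For part (1), I would apply Lemma \ref{L1} with $a = 0$: the equation $mx = 0$ has exactly $k$ solutions in $Z_n$ (since $k \mid 0$), one of which is $0$ itself. The remaining $k-1$ solutions are precisely the nonzero vertices $b$ with $mb = 0$, i.e., the neighbors of $0$ (note that $b = m \cdot 0 = 0$ is excluded since we need $b \neq 0$). So $\deg(0) = k-1$.

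For part (2), assume $a \neq 0$ and $k \nmid a$. By Lemma \ref{L1}, the equation $mx = a$ has no solution, so no vertex $b$ satisfies $a = mb$. Thus the only possible neighbor is $b = ma$, and by the last part of Lemma \ref{L1} we have $ma \neq a$ (using $a \neq 0$ and that every prime factor of $n$ divides $m$). Hence $\deg(a) = 1$.

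For part (3), assume $a \neq 0$ and $k \mid a$. Then $mx = a$ has exactly $k$ distinct solutions in $Z_n$, and we also have the vertex $ma$. The main task is to verify these $k+1$ vertices are distinct and none equals $a$. The inequality $ma \neq a$ again follows from Lemma \ref{L1}. That $a$ is not among the solutions of $mx = a$ amounts to the same statement $ma \neq a$. The key step, and the one I expect to be the subtlest, is showing $ma$ is \emph{not} itself a solution of $mx = a$: this would require $m^2 a = a$ in $Z_n$, i.e., $n \mid (m^2-1)a$. Since every prime factor of $n$ divides $m$, no prime factor of $n$ divides $m^2 - 1$, so $\gcd(m^2-1, n) = 1$, forcing $n \mid a$ and hence $a = 0$, a contradiction. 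Therefore the $k+1$ candidate neighbors are pairwise distinct and all different from $a$, giving $\deg(a) = k+1$.
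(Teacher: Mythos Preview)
Your proof is correct and follows essentially the same approach as the paper: both reduce the degree computation to counting solutions of $mx=a$ in $Z_n$ via Lemma~\ref{L1}, together with the extra neighbor $ma$ and the fact $ma\neq a$. In fact your treatment of part~(3) is slightly more careful than the paper's, since you explicitly verify that $ma$ is not itself a solution of $mx=a$ (ruling out $m^2a=a$ via $\gcd(m^2-1,n)=1$), a point the paper passes over silently.
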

\begin{proof} First, observe that every prime factor of $n$ is a prime divisor of $m$ by Theorem \ref{T1}.
	
	$(1)$. Since $k \mid n$ and $H$ is cyclic, $H$ has a unique subgroup, say $F$, of order $k$. By Lemma \ref{L1}, it is clear that $F$ is the solution set to the equation $mx = 0$ in $Z_n$. Since $0 \in F$ and every $b \in F - \{0\}$ is adjacent to $0$, we conclude that $deg(0) = |F| - 1 = k - 1$.
	
	$(2)$.  Assume that $ k\nmid a$ in $Z$. Then the equation $mx = a$ has no solution in $Z_n$. Let $b = ma \in Z_n$. Note that $b \not = a$ by Lemma \ref{L1} and $b$ is adjacent to $a$. Hence $deg(a) = 1$.
	
	$(3)$.  Assume that $a \not = 0$ and $k \mid a$ in $Z$. Let $F$ be the solution set to the equation $mx = a$ in $Z_n$. Then $|F| = k$ and $a \not \in F$ by Lemma \ref{L1}. Let $b = am \ \ (mod \ \ n) \in Z_n$. Then  $ma = b$ in $Z_n$ and $b$ is adjacent to $a$. Since $b \not = a$ by Lemma \ref{L1}. Thus $deg(a) = |F| + 1 = k + 1$.

\end{proof}

The following example shows that $H$ is cyclic in Theorem \ref{T2} is crucial.

\begin{exa}\label{E3}
	Let $H = Z_2 \times Z_4$. Then $H$ is a finite abelian group of order $8$ that is not cyclic. Then the $2-G(H)$ is
	
		\includegraphics[height = 8cm, width = 12cm]{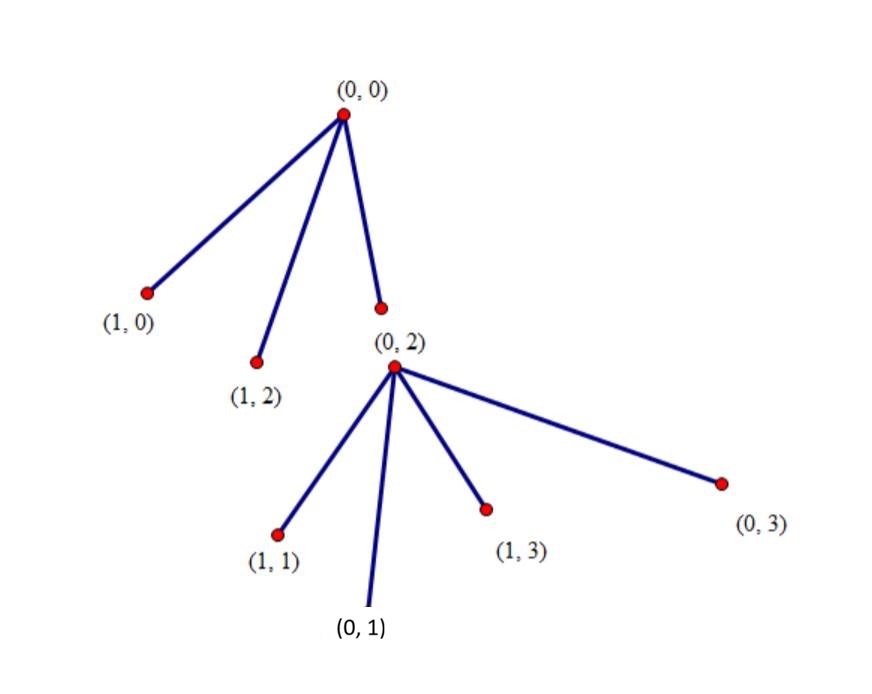}
	
	Note that $m = k = gcd(2, 8) = 2$, $deg(0, 0) = 3 \not = k -1$ , and $deg(0, 2) = 4 \not = k +1 $.
\end{exa}	
	We need the following result.
	
	\begin{lem}\label{L2}
		$m, n \geq 2$ be positive integer and $k = gcd(m, n)$.  Assume that every prime factor of $n$ is a prime factor of $m$. Let $a \in Z_n$ and $i\geq 1$ be a positive integer. Then $k^ia = 0$ in $Z_n$ if and only  $m^ia =0$ in $Z_n$.
	\end{lem}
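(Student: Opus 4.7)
The plan is to prove the two implications separately; the forward direction is essentially trivial, and the reverse reduces to a short coprimality argument.

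For $(\Rightarrow)$, I note that $k \mid m$ gives $k^i \mid m^i$, so writing $m^i = (m/k)^i \cdot k^i$ shows that any integer multiple of $k^i a$ is also a multiple of $m^i a$ in $Z$. Hence $n \mid k^i a$ immediately forces $n \mid m^i a$, i.e.\ $m^i a = 0$ in $Z_n$.

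For $(\Leftarrow)$, I would set $u = m/k$ and $n' = n/k$, which are positive integers, and use the standard gcd identity $gcd(u, n') = 1$ (a direct consequence of $k = gcd(m,n)$: any common divisor of $u$ and $n'$ would, multiplied by $k$, give a common divisor of $m$ and $n$ exceeding $k$). Assuming $m^i a = 0$ in $Z_n$, I translate this to $kn' \mid k^i u^i a$ in $Z$, cancel one factor of $k$ to get $n' \mid k^{i-1} u^i a$, apply the coprimality of $u^i$ with $n'$ to conclude $n' \mid k^{i-1} a$, and multiply back by $k$ to reach $n \mid k^i a$, i.e.\ $k^i a = 0$ in $Z_n$.

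The hypothesis that every prime factor of $n$ is a prime factor of $m$ --- equivalently, by Theorem \ref{T1}, the natural connectedness setting for the $m$-$G(H)$ --- forces $n$ and $k$ to share the same set of prime factors, although that fact enters only implicitly in the calculation above; indeed a $p$-adic valuation argument (splitting into the cases $v_p(m) \geq v_p(n)$ and $v_p(m) < v_p(n)$ for each prime $p \mid n$) would invoke it more visibly. I do not foresee a real obstacle beyond keeping the gcd book-keeping straight; the only step requiring mild care is the base case $i = 1$, where ``$n' \mid k^{i-1} a$'' degenerates to ``$n' \mid a$'' and still follows from the same coprimality step.
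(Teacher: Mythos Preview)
Your argument is correct and follows essentially the same route as the paper: both proofs hinge on the coprimality $\gcd(m/k,\,n/k)=1$, with the paper expressing this via a B\'ezout identity $1=r_1(m/k)^i+r_2(n/k)$ to write $k^ia$ as a $Z$-linear combination of $m^ia$ and $(n/k)k^ia$, while you cancel one factor of $k$ and invoke the coprime-divisor lemma directly. Your closing remark is on point --- indeed neither argument actually uses the hypothesis that every prime factor of $n$ divides $m$, so the equivalence holds without it.
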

	\begin{proof}
		Since $k = gcd(m, n)$, we have $n = dk$ and $m = qk$ for some positive integers $d, q\geq 1$ such that $gcd(d, q) = 1$.
		
	Let $a \in Z_n$ and $i\geq 1$ be a positive integer. If $k^ia = 0$ in $Z_n$, then $m^ia = q^ik^ia = 0$ in $Z_n$. Assume that  $m^ia = q^ik^ia = 0$ in $Z_n$. Since $gcd(d, q^i) = 1$, we have $1 = r_1q^i + r_2d$ for some integers $r_1, r_2$. Hence $k^ia = r_1q^ik^ia + r_2dk^ia$. Since $n \mid r_1q^ik^ia$ by hypothesis and $n \mid r_2dk^ia$, we conclude that $n \mid k^ia$. Thus $k^ia = 0$ in $Z_n$.
	\end{proof}
	\begin{thm}\label{T3}
				Let $H$ be a finite cyclic group of order $n > 1$, i.e., $H = Z_n$, and assume the $m-G(H)$ is connected for some integer $m > 1$. Then the $m$-$G(H)$ ($k$-$G(H)$) is a tree, and hence the chromatic number of the $m$-$G (H)$ ($k$-$G(H)$) is $2$.
					\end{thm}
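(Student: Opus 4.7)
The plan is to invoke the standard graph-theoretic fact that a connected graph on $N$ vertices is a tree if and only if it has exactly $N-1$ edges. Connectedness is given by hypothesis, so the task reduces to showing that $m$-$G(H)$ has exactly $n-1$ edges, which I would do by the handshake lemma combined with the degree formula just proved in Theorem \ref{T2}.

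The key computation is the following. Partition $Z_n$ into three classes according to Theorem \ref{T2}: $\{0\}$ contributes degree $k-1$; the $n/k - 1$ nonzero multiples of $k$ each contribute $k+1$; and the remaining $n - n/k$ elements not divisible by $k$ each contribute $1$. Summing,
$$\sum_{v \in Z_n} deg(v) = (k-1) + \left(\tfrac{n}{k}-1\right)(k+1) + \left(n - \tfrac{n}{k}\right) = 2(n-1),$$
so $m$-$G(H)$ has exactly $n-1$ edges and is therefore a tree.

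For the $k$-$G(H)$ claim, I would first verify that it too is connected: every prime factor of $n$ divides $m$ by Theorem \ref{T1}, hence also divides $k = gcd(m,n)$, so Theorem \ref{T1} applied with $k$ in place of $m$ (noting $gcd(k,n) = k$) yields connectedness of $k$-$G(H)$. Repeating the degree sum with $k$ playing the role of $m$ gives the same count $2(n-1)$, so $k$-$G(H)$ is also a tree. The chromatic number statement then follows because any tree with at least two vertices is bipartite (color by parity of distance from a fixed root) and contains an edge, forcing the chromatic number to be exactly $2$. I do not foresee a serious obstacle; the only point requiring attention is checking that $k$-$G(H)$ is connected before reusing Theorem \ref{T2} for it.
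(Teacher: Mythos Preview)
Your proposal is correct and follows essentially the same approach as the paper: partition $Z_n$ by the degree classes of Theorem \ref{T2}, sum degrees via the handshake lemma to obtain $2(n-1)$, and conclude the connected graph has $n-1$ edges and is therefore a tree. Your explicit verification that $k$-$G(H)$ is connected before reusing the argument is a nice extra detail that the paper leaves implicit.
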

	
	\begin{proof}
		First, observe that every prime factor of $n$ is a prime divisor of $m$ by Theorem \ref{T1}.
		Let $k = gcd(m, n)$. Then $n = qk$ for some integer $q \geq 1$. Let $D = \{v \in Z_n \mid deg(v) = k+1\}$. Then $D = \{k, ... , (q-1)k\}$ by Theorem \ref{T2}(3). Hence $|D| = q - 1$. Let $F = \{v \in Z_n \setminus \{0\} \mid deg(v) = 1\}$. Then $F = \{v \in Z_n \setminus \{0\} \mid k \nmid v\}$ by Theorem \ref{T2}(2). Then $|F| = n - 1 - |D| = n -1 - (q - 1) = n - q$.
		Thus $$\sum_{v \in Z_n} deg(v) = \sum_{v \in D} deg(v) + \sum_{v \in F} deg(v) + deg(0) = (k + 1)(q -1) + n - q + k -1 =$$
		$qk - k + q - 1 + n - q + k -1 = 2n - 2$. Let $E$ be the set of all edges of $m-G(H)$. Then it is known that $$|E| = \frac{\sum_{v \in Z_n} deg(v)}{2} = \frac{2n - 2}{2} = n-1.$$
		Since the $m-G(H)$ is connected with $n$ vertices and $n-1$ edges, we conclude that the $m-G(H)$ is a tree. It is known that the chromatic number of any tree is 2.
	\end{proof}
	
	In light of Theorem \ref{T2} and the proof of Theorem \ref{T3}, we have the following result.
\begin{cor}\label{C2}
	Let $H$ be a finite cyclic group of order $n \geq 2$, $m > 1$ be an integer such that every prime factor of $n$ is a prime factor of $m$, $k = gcd(m, n)$ (note that every prime factor of $n$ is a prime factor of $k$), and $n = qk$ for some integer $q \geq 1$. Then the following statements hold (note that the $m$-$G(H)$ is connected by Theorem \ref{T1}).
	\begin{enumerate}
		\item If $k > 2$, then $deg(0) = k  - 1 \geq 2$.
		\item Let $a\in H$ If $k \nmid a$ (in $Z$), then $deg(a) = 1$.
		\item Let $a \in H \setminus \{ 0\}$. If $k \mid a$, then $deg(a) = k + 1$.
		\item There are exactly $q - 1$ distinct elements in $H$ of degree $k + 1$.
		\item There are exactly $n - q$ distinct nonzero elements in $H$ of degree 1.
		\item If $k = 2$, then there are exactly $(n - q) + 1$ distinct elements in $H$ of degree 1.
	\end{enumerate}
\end{cor}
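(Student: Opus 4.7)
The plan is to observe that items (1)--(3) are essentially restatements of Theorem \ref{T2}, so the work is mostly bookkeeping, with (6) being the one part that requires a small additional observation.

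First I would dispose of (1), (2), and (3). For (1), Theorem \ref{T2}(1) gives $\deg(0) = k - 1$; the inequality $k - 1 \geq 2$ is just $k > 2$. Parts (2) and (3) are literally Theorem \ref{T2}(2) and \ref{T2}(3), applied to the cyclic group $H = Z_n$ with $k = \gcd(m, n)$ (note that connectedness, and hence the hypotheses of Theorem \ref{T2}, is guaranteed by Theorem \ref{T1} since every prime factor of $n$ divides $m$).

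For (4) and (5) I would reuse the counting already carried out in the proof of Theorem \ref{T3}. Writing $n = qk$, the set $D$ of vertices of degree $k + 1$ is exactly $\{k, 2k, \ldots, (q-1)k\}$ by (3), which has $q - 1$ elements; this proves (4). The set $F$ of nonzero vertices of degree $1$ is $\{v \in Z_n \setminus \{0\} : k \nmid v\}$ by (2), which contains exactly $(n - 1) - (q - 1) = n - q$ elements, giving (5).

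Finally, for (6) I would note the one extra subtlety: when $k = 2$, part (1) (or rather Theorem \ref{T2}(1)) gives $\deg(0) = k - 1 = 1$, so the identity $0$ itself now also has degree $1$. Combining this with (5), the total number of vertices of degree $1$ becomes $(n - q) + 1$, as claimed. This is the only point where the special role of $0$ enters the count, and it is the main (and only minor) obstacle; everywhere else the corollary is a direct compilation of Theorem \ref{T2} and the argument already given in the proof of Theorem \ref{T3}.
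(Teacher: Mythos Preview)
Your proof is correct and follows exactly the approach the paper indicates: the corollary is stated ``in light of Theorem \ref{T2} and the proof of Theorem \ref{T3}'' with no separate proof given, and your write-up simply makes that derivation explicit, including the small extra observation for (6) that $\deg(0)=1$ when $k=2$.
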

\begin{thm}\label{Tciso}
Let $H$ be a finite cyclic group of order $n \geq 2$, i.e., $H \cong Z_n$, $m > 1$ be an integer, and $k = gcd(m, n)$.  Assume the $m$-$G(H)$ is connected, i.e., every prime factor of $n$ is a prime factor of $m$. Then the $m$-$G(H)$ is graph-isomorphic to the $k$-$G(H)$.
\end{thm}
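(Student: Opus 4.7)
My plan is to construct an explicit graph isomorphism $\phi: Z_n \to Z_n$ with $\phi(0) = 0$ satisfying the intertwining relation $\phi(ma) = k\phi(a)$ for every $a \in Z_n$. Both $m$-$G(H)$ and $k$-$G(H)$ are trees by Theorem \ref{T3}, and every edge of $m$-$G(H)$ has the form $\{a, ma\}$; such a $\phi$ therefore sends $\{a, ma\}$ to $\{\phi(a), \phi(ma)\} = \{\phi(a), k\phi(a)\}$, which is an edge of $k$-$G(H)$, so any such $\phi$ is automatically a graph isomorphism.

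I would proceed by strong induction on $n = |H|$, strengthening the inductive hypothesis to require that the isomorphism fix $0$. The base case is $q := n/k = 1$ (equivalently $k = n$): both graphs collapse to the star $K_{1, n-1}$ and the identity map suffices. For the inductive step $q \geq 2$, I would focus on the subgroup $M = k Z_n \subset Z_n$ of multiples of $k$. Via the identification $M \leftrightarrow Z_q$, $ki \mapsto i$, the induced subgraph of $m$-$G(H)$ on $M$ is the $(m \bmod q)$-graph on $Z_q$, and similarly for $k$-$G(H)$. The algebraic identity $gcd(m, q) = gcd(k, q)$, which follows from $gcd(m/k, q) = 1$, allows me to apply the inductive hypothesis twice and obtain an isomorphism $\phi_0$ between the two induced subgraphs satisfying $\phi_0(0) = 0$.

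To extend $\phi_0$ to the leaves (elements of $Z_n \setminus M$), I would count, using Lemma \ref{L1}, the leaves of $m$-$G(H)$ attached to each internal vertex $b \in M$. Writing $d = gcd(k, q)$ and $b = ki$, a direct computation shows this count equals $k - d$ when $d \mid i$ (including $i = 0$) and $k$ otherwise. Since the formula depends only on $n$, $k$, and $b$, the same count governs $k$-$G(H)$, and the preservation of these ``types'' by $\phi_0$ (guaranteed by degree-preservation together with $\phi_0(0) = 0$) ensures that the leaf-set at $b$ and the leaf-set at $\phi_0(b)$ have equal cardinality. Choosing any bijection between them on each fibre extends $\phi_0$ to a bijection $\phi: Z_n \to Z_n$ satisfying the intertwining relation, which is the desired isomorphism.

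The main obstacle is the degenerate subcase $q \mid m$ (equivalently $q \mid k$), where $m \bmod q = k \bmod q = 0$ and the inductive hypothesis does not apply verbatim; here the induced subgraphs on $M$ are both $K_{1, q-1}$ and I would take $\phi_0$ to be the identity on $M$. A secondary technical point arises when $d = 2$: the root of the induced subgraph and its leaves share the same degree $1$, so degree-preservation alone does not force $\phi_0(0) = 0$—the strengthened inductive hypothesis is required precisely to rule out $\phi_0(0) \neq 0$ in this case.
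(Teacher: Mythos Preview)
Your inductive approach is correct and genuinely different from the paper's. The paper constructs the bijection in one stroke: for each $a$ with $k\nmid a$ it sends $k^i a$ to $m^i a$ (for $0\le i\le w$), then verifies surjectivity and edge-preservation directly using Lemma~\ref{L2}. You instead peel off the leaves, reduce the internal skeleton $M=kZ_n\cong Z_q$ to the inductive hypothesis via the identity $\gcd(m,q)=\gcd(k,q)$, and reattach the leaves by a counting argument. The paper's route is shorter and yields an explicit formula for the isomorphism; yours is more structural, makes the tree property (Theorem~\ref{T3}) do the work, and avoids the well-definedness check that the explicit map requires. One point to tighten: your claim that the extended $\phi$ satisfies the intertwining $\phi(ma)=k\phi(a)$ needs, for $a\in M$, that $\phi_0$ already intertwines---which your strengthened hypothesis does not literally provide. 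This closes at once: either observe that a root-preserving isomorphism of rooted trees sends parents to parents (and $ma$, respectively $ka$, is exactly the parent of $a$ in the two graphs), or simply drop the intertwining and note that an edge-preserving bijection between two trees on $n$ vertices is automatically a graph isomorphism. Your ``degenerate subcase'' $q\mid m$ is also not a genuine obstacle: the inductive hypothesis applies to $(Z_q,m)$ with $m$ itself (no need to reduce modulo $q$), and returns $K_{1,q-1}$ on both sides.
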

\begin{proof}
If $n \mid m$, then $k = n$ and the $m$-$G(H)$ $\cong$ $k$-$G(H)$ $\cong$ $K_{1, n-1}$.  Hence, assume that $k \not = n$. Let $w$ be the least positive integer such that $n \mid k^w$. Hence $w$ is the least positive integer such that $n \mid m^w$ by Lemma \ref{L2}. Since $H \cong Z_n$, let $f$: $Z_n$  $\rightarrow$ $Z_n$ such that $f(0) = 0$. Let $a \in Z_n \setminus \{0\}$. If $k\nmid a$ (in $Z$), then define $f(a) = a$ and $f(k^ia) = m^ia \in Z_n$ for every $1\leq i \leq w$. Note that $k^ia = 0$ (in $Z_n$) if and only if $m^ia = 0$ in ($Z_n$) by Lemma\ref{L2}. Hence, if $k^ia \not = 0$ in $Z_n$ for some $1 \leq i < w$, then  $deg(k^ia) = deg(f(m^ia)) = deg(m^ia) = k+1$ by Theorem \ref{T2}.  To show that $f$ is a bijective function, it suffices to show that $f$ is surjective. Let $v \in Z_n$. If $v = 0$, then $f(0) = 0$. Assume that $v \not = 0$. If $k\nmid v$ in $Z$, then $f(v) = v$.  If  $k\mid v$ in $Z$, then there is a $c \in Z_n$ such that $k\nmid c$ in $Z$ and $v = k^ic$ in $Z$ for some $1 \leq i < w$. Since $gcd (m^i, n) = k^i$, there is a $b \in Z_n$ such that $m^ib = k^ic= v$ in $Z_n$. If $k\nmid b$ in $Z$, then $f(k^ib) = m^ib = v$ in $Z_n$. Suppose that $k \mid b$ in $Z$. Since $k^{i+1} \mid m^ib$ in $Z$ and $k^{i+1} \nmid k^ic$ in $Z$,  we conclude that $k^{i+1} \nmid n$ in $Z$. Let $b + \frac{n}{k^i} = y\in Z_n$. Then $k \nmid y$ in $Z$ and $m^iy = k^ic = v \in Z_n$. Hence $f(k^iy) = m^iy = v$ in $Z_n$. Thus $f$ is surjective. Assume $d-kd$ is an edge in $k$-$G(H)$ for some $d \in Z_n - \{0\}$. Suppose that $deg(d) = 1$ (i.e., $k\nmid d$ in $Z$). Then $f(d)-f(kd) = d-md$ is an edge in $m$-$G(H)$.  Note that $kd \not = d$ in $Z_n$ and $md \not = d$ in $Z_n$ by Lemma \ref{L1}. Suppose that $k \mid d$ in $Z$. Then $d = k^ih$ (in $Z$)for some $1 \leq i <w$  and positive integer $h$ such that $k \nmid h$ in $Z$. Thus $d-kd = k^ih-k^{i + 1}h$ is an edge in $k$-$G(H)$. Hence $f(k^ih)-f(k^{i + 1}h)) = m^ih-m^{i+1}h$ is an edge in $m$-$G(H)$. Thus the $m$-$G(H)$ is graph-isomorphic to the $k$-$G(H)$.

\end{proof}

\begin{thm}\label{D1}
Let $H$ be a finite cyclic group of order $n \geq 2$, i.e., $H \cong Z_n$, $m > 1$ be an integer, and $k = gcd(m, n)$.  Assume the $m$-$G(H)$ ($k$-$G(H)$) is connected, i.e., every prime factor of $n$ is a prime factor of $m$. Let $w$ be the least positive integer such that $k^w = 0$ in $Z_n$. Let $a \in Z_n \setminus \{0\}$ such that $k\nmid a$ in $Z$ and $r$ be the least positive integer such that $n \mid am^r$ ($ak^r$). Then $d(a, 0) = r \leq w$. In particular, $d(1, 0) = d(n-1, 0) = w$ and $diam$($m$-$G(H)$) = $diam$($k$-$G(H)$) $\leq 2w$.
\end{thm}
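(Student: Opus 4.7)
The plan is to prove $d(a,0)=r$ by exhibiting a concrete simple path of length $r$ from $a$ to $0$; since $m$-$G(H)$ is a tree by Theorem~\ref{T3}, this path must be the unique $a$-to-$0$ path, and its length gives $d(a,0)$. First I observe that the two conditions $n \mid am^r$ and $n \mid ak^r$ are equivalent by Lemma~\ref{L2} applied to $a$, so $r$ is unambiguously defined. I then consider the walk $W \colon a\text{--}am\text{--}am^2\text{--}\cdots\text{--}am^r = 0$; each consecutive pair is an edge of $m$-$G(H)$ by definition, and the endpoint is $0$ by the choice of $r$.

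The crux is to show $W$ has no repeated vertices. If $am^i = am^j$ in $Z_n$ for some $0 \le i < j \le r$, then $n \mid am^i(m^{j-i}-1)$. Since every prime factor of $n$ divides $m$ by Theorem~\ref{T1}, no prime factor of $n$ divides $m^{j-i}-1$, so $\gcd(n,\,m^{j-i}-1)=1$. Hence $n \mid am^i$, i.e., $am^i=0$, contradicting the minimality of $r$. Thus $W$ is a simple path, and the tree property forces $d(a,0)=r$. The bound $r \le w$ is immediate: $n \mid k^w$ gives $n \mid ak^w$, which by Lemma~\ref{L2} yields $n \mid am^w$, so $r \le w$.

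For the special cases $d(1,0)=d(n-1,0)=w$, note that connectedness together with $n>1$ forces $k>1$ (by Theorem~\ref{T1}, $k$ and $n$ share the same prime factors), so $k \nmid 1$ and $k \nmid n-1$. The least $r$ with $n \mid m^r \cdot 1 = m^r$ equals $w$ by definition of $w$ and Lemma~\ref{L2}, and replacing $1$ by $n-1 \equiv -1$ reduces to the same divisibility condition. For the diameter bound, the identical no-repeats argument applied to an arbitrary nonzero $c \in Z_n$ shows that if $s$ is the least positive integer with $cm^s = 0$ (which exists and satisfies $s \le w$, since $n \mid m^w$ forces $n \mid cm^w$), then the walk $c\text{--}cm\text{--}\cdots\text{--}cm^s$ is a simple path of length $s$, so $d(c,0) \le s \le w$. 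The triangle inequality then gives $\mathrm{diam}(m\text{-}G(H)) \le 2w$, and the equality $\mathrm{diam}(m\text{-}G(H))=\mathrm{diam}(k\text{-}G(H))$ is immediate from the graph-isomorphism of Theorem~\ref{Tciso}. I expect the no-repeats step to be the main obstacle, as it depends on the number-theoretic fact that the connectivity hypothesis makes $\gcd(m^j-1,\,n)=1$ for every $j \ge 1$.
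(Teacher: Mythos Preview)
Your proposal is correct and follows essentially the same strategy as the paper's proof: exhibit the explicit walk $a\text{--}am\text{--}\cdots\text{--}am^r=0$, argue it is a simple path (the paper cites Lemma~\ref{L1} for this, while you give the slightly more explicit coprimality argument that $\gcd(m^{j-i}-1,n)=1$), and then use the tree property from Theorem~\ref{T3} to conclude it is the unique $a$-to-$0$ path. The only cosmetic difference is that the paper first passes to $k$-$G(H)$ via Theorem~\ref{Tciso} and argues there, whereas you work directly in $m$-$G(H)$ and invoke the isomorphism only at the end for the equality of diameters.
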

\begin{proof}
Since the $m$-$G(H)$ is graph-isomorphic to the $k$-$G(H)$ by Theorem \ref{Tciso}, it suffices to prove the claim for the $k$-$G(H)$. Let $w$ be the least positive integer such that $k^w = 0$ in $Z_n$. Let $a \in Z_n \setminus \{0\}$ such that $k\nmid a$ in $Z$ and $r$ be the least positive integer such that $n \mid am^r$ ($ak^r$). Since $a\not = 0$, $k(ak^i) = k^ia$ only when $i = r$ by Lemma \ref{L1}. Since the $k$-$G(H)$ is a tree by Theorem \ref{T3}, $a-ak-  \cdots - ak^r = 0$ is the unique path between $a$ and $0$ in $k$-$G(H)$. Hence $d(a, 0) = r \leq w$. Since $w$ is the least positive integer such that $n \mid k^w$, we conclude that $d(1, 0) = d(n-1,0)) = w$. Hence $diam$($m$-$G(H)$) = $diam$($k$-$G(H)$) $\leq 2w$.
		\end{proof}.
		\begin{defi}
			Let $n, p \geq 2$ be positive integers such that $p \mid n$. Suppose that $p^a \mid n$ for some positive integer $a\geq 1$, but $p^{a+1} \nmid n$, then we write $p^a\mid\mid n$.
		\end{defi}
		\begin{thm}\label{DIM0}
			Let $H$ be a finite cyclic group of order $n \geq 2$, $m > 1$ be an integer such that every prime factor of $n$ is a prime factor of $m$,  $k = gcd(m, n)$ (note that every prime factor of $n$ is a prime factor of $k$), and $n = qk$ for some integer $q \geq 1$. Then the following statements hold.
			\begin{enumerate}
			\item If $q = 1$ and $|H| = n = 2$, then $k = n = 2$ and the $m$-$G(H)$ $\cong$ $k$-$G(H)$ $\cong$  $K_{1, 1}$ and $diam$($m$-$G(H)) = diam$($k$-$G(H)) = 1$.
			\item If $q = 1$ and $|H| = n > 2$, then $k = n > 2$ and the $m$-$G(H)$ $\cong $ $k$-$G(H)$ $\cong K_{1, n-1}$ and $diam$($m$-$G(H)) = diam$($k$-$G(H)) = 2$.
			\item  If $k = 2 \not = n$, then $|H| = n = 2^w$ for some positive integer $w \geq 2$ and  $diam$($m$-$G(H)) = diam$($k$-$G(H)) = 2(w - 1)$. 			
			\item If $k > 2$ and $|H| = n = k^w$ for some positive integer $w \geq 1$, then  $diam$($m$-$G(H)) = diam$($k$-$G(H)) = 2w$.
			\item Let $w$ be the least positive integer such that $n \mid k^w$. Suppose there is an odd prime $p \mid n$ (and hence $p \mid k$) and a positive integer $a\geq 2$ such that $p^a\mid \mid n$, but $p^a \nmid k^{w-1}$.  Then $diam$($m$-$G(H)) = diam$($k$-$G(H)) = 2w$.
			\item Assume that $n$ is an odd integer, and $w$ is the least positive integer such that $n \mid k^w$. Then $diam$($m$-$G(H)) = diam$($k$-$G(H)) = 2w$.
			\item Suppose that $n$ is an even integer, $ n \not = k$, and $w$ is the least positive integer such that $n \mid k^w$.  Then one of the following holds.
			\begin{enumerate}
				\item Assume that $2k^{w-1} \not = 0$ in $Z_n$. Then $diam$($m$-$G(H)) = diam$($k$-$G(H)) = 2w$.
				\item Assume that $2k^{w-1} = 0$ in $Z_n$ and $k \not = 2$. Then $diam$($m$-$G(H)$) = $diam$($k$-$G(H)$) = $2w - 1$.
				\item Assume that  $2k^{w-1} = 0$ in $Z_n$ and $k = 2$. Then $diam$($m$-$G(H)$) = $diam$($k$-$G(H)$) = $2(w - 1)$.
			\end{enumerate}
			\end{enumerate}
		\end{thm}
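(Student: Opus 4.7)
The plan is to invoke Theorem \ref{Tciso} to replace $m$-$G(H)$ by $k$-$G(H)$, which by Theorem \ref{T3} is a tree on $Z_n$. View this tree as rooted at $0$: every non-zero $a$ has unique parent $ka$, depth $r(a) = d(a, 0)$ by Theorem \ref{D1}, and the diameter is at most $2w$. Items (1) and (2) are immediate from $q = 1 \iff k = n$: then $kx = 0$ for every $x$, so the graph is the star $K_{1, n-1}$ of diameter $1$ (if $n = 2$) or $2$ (if $n > 2$).

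For items (3)--(7) the central quantity is $d(1, -1)$. The ancestor chains from $1$ and $-1$ are $\{k^i\}_{0 \leq i \leq w}$ and $\{-k^j\}_{0 \leq j \leq w}$; any common ancestor sits at one depth, so $k^i = -k^j$ in $Z_n$ forces $i = j$ and $2 k^i \equiv 0 \pmod n$. Writing $i_0$ for the least such $i$, this yields $d(1, -1) = 2 i_0$, and the diameter equals $2w$ whenever $i_0 = w$. Items (4), (5), (6), and (7a) each reduce to verifying $i_0 = w$: for (4), $n = k^w$ with $k > 2$ makes $k^{w - i_0} \mid 2$ impossible unless $i_0 = w$; for (5), the hypothesis gives $i_0 \cdot v_p(k) \geq v_p(n) > (w-1) v_p(k)$ at the odd prime $p$, hence $i_0 \geq w$; for (6), $n$ odd reduces $n \mid 2 k^{i_0}$ to $n \mid k^{i_0}$; and for (7a), $i_0 < w$ would give $n \mid k^{w-1-i_0}(2 k^{i_0}) = 2 k^{w-1}$, contradicting the hypothesis. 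Case (3) uses the same framework with $k = 2$, $n = 2^w$: $i_0 = w - 1$ and $d(1, -1) = 2(w - 1)$, while $\deg(0) = 1$ makes $0$ itself a leaf so every lca has depth $\geq 1$, capping the diameter at $2(w - 1)$. Case (7c) collapses to (3) since $2 k^{w-1} = 0$ with $k = 2$ forces $n = 2^w$.

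The main obstacle is case (7b): $n$ even, $n \neq k$, $2 k^{w-1} = 0$, and $k \neq 2$. Here $n$ even forces $k$ even, so $k \geq 4$, and $2 k^{w-1} = 0$ with $k^{w-1} \neq 0$ gives $k^{w-1} = n/2$ in $Z_n$. The upper bound $\text{diam} \leq 2w - 1$ follows from a parity argument: any leaf $c$ of depth $w$ must be odd, since $c = 2c'$ would give $c k^{w-1} = c' \cdot 2 k^{w-1} = 0$ and hence depth $\leq w - 1$. Thus any two depth-$w$ leaves $a, b$ are odd, $a - b$ is even, $(a - b) k^{w-1} = 0$, and so $a k^{w-1} = b k^{w-1}$, placing their lca at depth $\geq 1$ and $d(a, b) \leq 2w - 2$. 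For the matching lower bound one must exhibit a pair realizing $2w - 1$: a depth-$w$ leaf (for instance $1$) together with a depth-$(w - 1)$ leaf whose ancestor chain meets $\{k^i\}$ only at $0$. A natural candidate is $b = 2$, for which $r(2) = w - 1$ follows from $k^{w-1} = n/2$ and $k \geq 4$; verifying that the chains $\{k^i\}$ and $\{2 k^j\}$ first coincide at $0$ is the delicate step and may require a finer case analysis based on the prime factorization of $n$ relative to $k$.
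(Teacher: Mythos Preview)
Your framework coincides with the paper's: reduce to the $k$-$G(H)$ via Theorem~\ref{Tciso}, view it as a tree rooted at $0$, invoke Theorem~\ref{D1} for the bound $diam \le 2w$, and then exhibit an explicit long path. The paper realises the lower bounds using the pair $(1,2)$ rather than your $(1,-1)$, but the two devices are interchangeable; your lca/valuation arguments for items (3)--(6), (7a), and (7c) are correct and amount to the same verifications the paper carries out with concrete paths.

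Your caution at (7b) is exactly right, and the step you could not complete is the one the paper glosses over. For the lower bound the paper asserts that $k^{w-1}$ and $2k^{w-2}$ are two \emph{distinct} children of $0$, so that
\[
1-k-\cdots-k^{w-1}-0-2k^{w-2}-\cdots-2
\]
is a simple path of length $2w-1$; the only justification offered is ``since $k\neq 2$''. This distinctness fails in general. Take $n=24$ and $k=6$: then $w=3$, $2k^{w-1}=72\equiv 0$ in $Z_{24}$, and $k\neq 2$, so we are squarely in case (7b); yet $k^{w-1}=36\equiv 12$ and $2k^{w-2}=12$ coincide in $Z_{24}$. A direct computation of $6$-$G(Z_{24})$ shows that every vertex of depth $\ge 2$ lies in the subtree below $12$, and the diameter is $4=2w-2$, not $2w-1=5$. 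Thus the ``finer case analysis'' you anticipated cannot rescue the argument: statement (7b) is false as written, and the paper's proof contains a genuine error at precisely the point you flagged as delicate.
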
		
		\begin{proof} Since the $m$-$G(H)$ is graph-isomorphic to the $k$-$G(H)$ by Theorem \ref{Tciso}, it suffices to prove the claims for the $k$-$G(H)$.
			\newline
		(1) and (2). The proof is clear.
			\newline
			(3). Assume that $k = 2 \not = n$. Since every prime factor of $n$ is a prime factor of $k$, we conclude that $|H| = n = 2^w$ for some positive integer $w \geq 2$. Since $k = 2$, it is clear that $2^{w-1}$ is the only element in $Z_n$  adjacent to $0$ by Theorem \ref{T2}(1). Since $d(1, 0) = d(n - 1, 0) = w$ by Theorem \ref{D1} and $2^{w-1} = (n-1)2^{w-1}$ $= \frac{n}{2}$ in $Z_n$, we conclude that $1-2-2^2\cdots -2^{w-1} = (n-1)2^{w-1}-(n-1)2^{w-2}-\cdots-(n-1)2-(n-1)$ a path of maximum length in $k$-$G(H)$. Since $d(1, -1) = 2(w-1)$,   $diam$($m$-$G(H)) = diam$($k$-$G(H)) = 2(w - 1)$.
			\newline
			(4). Assume that  $k >  2$ and $|H| = n = k^w$ for some positive integer $w \geq 2$. Since $k > 2$, it is clear that $2-2k-2k^2-\cdots -2k^{w-1}-2k^w = 0$ is a path of length $w$ in $K$-$G(H)$. Since $d(1, 0) = w$ by Theorem \ref{D1}, we conclude that $1-k-k^2-\cdots - k^w=0-2k^{w-1}-2k^{w-2}-\cdots -2k-2$ is a path in $k$-$G(H)$ of length $2w$. Thus $diam$($m$-$G(H)) = diam$($k$-$G(H)) = 2w$ by Theorem \ref{D1}.
			\newline
			(5) Let $w$ is the least positive integer such that $n \mid k^w$. Assume there is an odd prime $p \mid n$ (and hence $p \mid k$) and a positive integer $a\geq 2$ such that $p^a\mid \mid n$, but $p^a \nmid k^{w-1}$ (note that $k^0 = 1$). Then it is clear that $1-k-k^2-\cdots - k^w=0=2k^w-2k^{w-1}-2k^{w-2}-\cdots -2k-2$ is a path in $k$-$G(H)$ of length $2w$. Hence $diam$($m$-$G(H)) = diam$($k$-$G(H)) = 2w$ by Theorem \ref{D1}.
			\newline
			(6) Assume that $n$ is an odd integer and $w$ is the least positive integer such that $n \mid k^w$. Then there  is an odd prime $p \mid n$ (and hence $p \mid k$) and a positive integer $a\geq 2$ such that $p^a \mid\mid n$, but $p^a \nmid k^{w-1}$. Hence $diam$($m$-$G(H)) = diam$($k$-$G(H)) = 2w$ by (5).
			\newline
			(7) Suppose that $n$ is an even integer, $n\not = 2$, and $w$ is the least positive integer such that $n \mid k^w$.
			\newline
				(a). Assume that $2k^{w-1} \not = 0$ in $Z_n$. Then it is clear that $1-k-k^2-\cdots - k^w=0=2k^w-2k^{w-1}-2k^{w-2}-\cdots -2k-2$ is a path in $k$-$G(H)$ of length $2w$. Hence $diam$($m$-$G(H)) = diam$($k$-$G(H)) = 2w$ by Theorem \ref{D1}.
				\newline
				(b). Since $k \not = n$, $w\geq 2$. Assume that $2k^{w-1} = 0$ in $Z_n$ and $k \not = 2$. Since $2k^{w-1}$, $vk^{w-1} = 0$ in $Z_n$ for every even integer $v \in Z_n$ and $dk^{w-1} = k^{w-1} = k^{w-1}$ in $Z_n$ for every odd integer $d\in Z_n$. Since $k$-$G(H)$ is a tree, every path of length $w$ from an element $a\in Z_n\setminus \{0\}$ to $0$ must pass through $k^{w-1}$. Hence $diam$($k$-$G(H)$) $ <  2w$. Since $k \not = 2$, $k^{w-1}$ and $2k^{w-2}$ (note that $2k^0 = 2 \not = k$ ) are two distinct element in $Z_n$ that are adjacent to $0$. Since $k(2k^{w-1}) = 2k^{w-1} = 0$ in $Z_n$, $d(2, 0) = w-1$. Hence $1-k-k^2-\cdots - k^w=2k^{w-1} = 0-2k^{w-2}-\cdots -2k-2$ is a path of maximum length in $k$-$G(H)$. Since $d(1, 2) = w + w-1 = 2w - 1$, we conclude that $diam$($m$-$G(H)) = diam$($k$-$G(H)) = 2w -1$.
				\newline
				(c). Since $k \not = n$, $w\geq 2$. Assume that $2k^{w-1} = 0$ in $Z_n$ and $k = 2$. Since every prime factor of $n$ is a prime factor of $k$, we conclude $|H| = 2^w$. Thus $diam$($m$-$G(H)) = diam$($k$-$G(H)) = 2(w - 1)$ by (3).
		\end{proof}
In light of Theorem \ref{DIM0}(3, 6, 7), we have the following
result.
\begin{cor}\label{C3}
	Let $H$ be a finite cyclic group of order $n \geq 2$, $m > 1$ be an integer such that every prime factor of $n$ is a prime factor of $m$,  $k = gcd(m, n)\not = n$ (note that every prime factor of $n$ is a prime factor of $k$), and $n = qk$ for some integer $q \geq 2$. Let $w$ be the least positive integer such that $k^w = 0$ in $Z_n$. The following statements hold.		

	\begin{enumerate}
		\item $diam$($m$-$G(H)$) = $diam$($k$-$G(H)$) $\in \{2w, 2w - 1, 2(w-1)\}$.
		
		\item $diam$($m$-$G(H)$) = $diam$($k$-$G(H)$) = $2w$ if and only if $n$ is an odd integer or $n$ is an even integer such that $2k^{w-1} \not = 0$ in $Z_n$.
		\item $diam$($m$-$G(H)$) = $diam$($k$-$G(H)$) = $2w - 1$ if and only if $n$ is an even integer, $2k^{w-1} = 0$ in $Z_n$, and $k \not = 2$.
		\item $diam$($m$-$G(H)$) = $diam$($k$-$G(H)$) = $2(w - 1)$ if and only if $n$ is an even integer, and $k = 2$ (and hence $|H| = 2^w$).
	\end{enumerate}
	\end{cor}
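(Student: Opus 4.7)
The plan is to derive this corollary as a direct reformulation of Theorem \ref{DIM0} in equivalence form, requiring no new graph-theoretic work. First I would record that, since every prime factor of $n$ divides $k$ and $q\geq 2$, we have $k\geq 2$ and $n>k$; hence the least $w$ with $n\mid k^w$ satisfies $w\geq 2$, and the three candidate values $2w$, $2w-1$, $2(w-1)$ are pairwise distinct. I would also note the soft implications that $k=2$ forces $n$ to be even (because $k\mid n$) and that $n$ odd forces $k$ odd.

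For item (1), I would partition on the parity of $n$ and on the value of $k$. If $n$ is odd, Theorem \ref{DIM0}(6) gives diameter $2w$. If $n$ is even and $k=2$, then Theorem \ref{DIM0}(3) (equivalently \ref{DIM0}(7)(c)) gives $2(w-1)$. If $n$ is even and $k>2$, then one of \ref{DIM0}(7)(a) or (7)(b) applies, yielding diameter $2w$ or $2w-1$ respectively. This exhausts all possibilities and places $diam$($m$-$G(H)$) in $\{2w,\,2w-1,\,2(w-1)\}$.

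For items (2)--(4) I would verify that the three right-hand side conditions are jointly exhaustive and pairwise mutually exclusive. The only overlap check that is not immediate is between (2) and (4): if $k=2$ then $2k^{w-1}=2^{w}\equiv 0$ in $Z_n$ by definition of $w$, so the case $k=2$ always satisfies $2k^{w-1}=0$ and is disjoint from the ``$2k^{w-1}\neq 0$'' clause of (2); the remaining separations are immediate parity or substitution checks. Given this trichotomy together with the distinctness of the three diameter values noted above, each ``if'' direction is a direct citation of the appropriate clause of Theorem \ref{DIM0}, and each ``only if'' direction follows automatically because either alternative condition would force a different (and hence incorrect) diameter value. The main obstacle is purely bookkeeping --- matching the case decomposition in Theorem \ref{DIM0} to the trichotomy stated in the corollary and confirming $w\geq 2$ so that the three diameters are genuinely distinct --- after which the result drops out without further argument.
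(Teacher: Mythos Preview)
Your proposal is correct and matches the paper's approach exactly: the paper offers no separate proof for this corollary, merely prefacing it with ``In light of Theorem \ref{DIM0}(3, 6, 7)'', so your explicit case analysis and trichotomy bookkeeping are precisely the details the paper leaves implicit. The observation that $k=2$ forces $2k^{w-1}=k^w\equiv 0$ (ensuring conditions (2) and (4) are disjoint) is the one non-obvious check, and you have it.
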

\begin{thm}\label{CDIM}
	Let $n > 2$, and $k\mid n$ such that every prime factor of $n$ is also a prime factor of $k$ (i.e., the $k$-$G()$ is connected), and $H = Z_n$. Let $i$ be the least positive integer such that $k^i\mid n$, and hence $n = qk^i$ for some integer $1 \leq q < k$. Let $w$ be the least positive integer such that $n\mid k^w$.  The following statements hold.
	 \begin{enumerate}
	 	\item If $q = 1$, then $w = i$. If $q\not = 1$, then $w = i+1$.
	 	\item If $q = 1$ and $k >2$, then $diam$($k$-$G(H)$) = $2i = 2w$.
	 	\item If $q = 1$ and $k = 2$, then $diam$($k$-$G(H)$) = $2(i - 1) = 2(w-1)$.
	 	\item If $q = 2$, then $diam$($k$-$G(H)$) = $2w - 1 = 2i + 1$.
	 	\item if $q > 2$, then $diam$($k$-$G(H)$) = $2w = 2(i+1)$.
	 \end{enumerate}
\end{thm}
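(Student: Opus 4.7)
The plan is to reduce the entire theorem to Theorem \ref{DIM0} (equivalently Corollary \ref{C3}). The only substantive new content will be item (1), which links the exponent $w$ appearing in those earlier results to the pair $(i,q)$ used here; once (1) is in hand, items (2)--(5) will be direct case-by-case translations.

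For (1), I would argue as follows. When $q=1$ we have $n=k^i$, so $n\mid k^i$ and $n\nmid k^{i-1}$, giving $w=i$. When $q\neq 1$, the standing hypothesis together with the factorization $n=qk^i$ with $q<k$ forces $q\mid k$ (every prime factor of $q$ divides $n$ and hence $k$, and the maximality implicit in the choice of $i$ leaves $q$ a proper divisor of $k$). Consequently $qk^i\mid k\cdot k^i=k^{i+1}$ while $qk^i\nmid k^i$, so $w=i+1$.

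Items (2)--(5) will then follow by inspecting the hypotheses of Theorem \ref{DIM0}. In (2) and (3) we are in the case $q=1$, so $n=k^i=k^w$, and Theorem \ref{DIM0}(4) (for $k>2$), respectively Theorem \ref{DIM0}(3) (for $k=2$), deliver the claimed diameters $2i$ and $2(i-1)$. For (4), $q=2$ together with $q<k$ forces $k\geq 3$, and since $w=i+1$ we have $2k^{w-1}=2k^i=n\equiv 0$ in $Z_n$; Theorem \ref{DIM0}(7)(b) then gives $\mathrm{diam}=2w-1=2i+1$. For (5), $q\geq 3$ implies $2k^{w-1}=2k^i<qk^i=n$, so $2k^{w-1}\neq 0$ in $Z_n$; Theorem \ref{DIM0}(6) (if $n$ is odd) or Theorem \ref{DIM0}(7)(a) (if $n$ is even) then yields $\mathrm{diam}=2w=2(i+1)$.

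The main obstacle will be item (1), specifically justifying $w=i+1$ when $q\neq 1$, which hinges on the claim that $q\mid k$ under the stated conditions; after that the remaining items are a routine case split against Theorem \ref{DIM0} and I do not anticipate any further serious difficulty.
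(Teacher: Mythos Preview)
Your reduction of (2)--(5) to Theorem~\ref{DIM0}/Corollary~\ref{C3} is exactly what the paper does, and your case analysis there is correct. For (1) the paper simply asserts the conclusion (``Since $n=qk^i$ and $1\le q<k$, it is clear that $w=i$ if $q=1$, and $w=i+1$ if $q\neq1$''), so in trying to justify it you are already going further than the paper.

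Your argument for $q\mid k$, however, is invalid: ``every prime factor of $q$ divides $k$'' together with ``$q<k$'' (or with ``$k\nmid q$'', which is what maximality of $i$ actually yields) does \emph{not} imply $q\mid k$; take $q=4$, $k=6$. This is not merely a gap in the argument but in the statement itself: with $n=24$ and $k=6$ one has (reading $i$ as the largest exponent with $k^i\mid n$, as your phrase ``maximality'' indicates) $i=1$ and $q=4<6$, yet $24\nmid 36$ and $24\mid 216$ give $w=3\neq i+1$. So the obstacle you flagged in your last paragraph is genuine and cannot be removed under the stated hypotheses; item~(1), and with it the clause ``hence $n=qk^i$ for some $1\le q<k$'', tacitly requires the extra condition $q\mid k$, which neither you nor the paper actually establishes.
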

\begin{proof}
	(1). Since $n = qk^i$ and $1\leq q < k$, it is clear that $w = i$ if $q = 1$, and $w = i+1$ if $q\not = 1$.
(2). Assume that $q = 1$ and $k >2$. Then $n = k^i = k^w$, Hence  $diam$($k$-$G(H)$) = $2i = 2w$ by Theorem \ref{DIM0}(4).
\newline
(3). Assume that $q = 1$ and $k = 2$. Since $|H| > 2$, $|H| = 2^i = 2^w$ and  $i = w \geq 2$. Thus  $diam$($k$-$G(H)$) = $2(i - 1) = 2(w - 1)$ by Theorem \ref{DIM0}(3).
\newline
(4). Assume that $q = 2$. Hence $w = i + 1$. Since $1 \leq q <k$, we conclude that $k \not = 2$. Since $2k^{w-1} = 2k^i = 0$ in $Z_n$ and $k \not = 2$, we conclude that $diam$($k$-$G(H)$) = $2w - 1$ =  $2i + 1$ by Corollary \ref{C3}(3).
\newline
(5). Assume that $q > 2$. Then $w = i + 1$. Since $2k^{w-1} = 2k^i \not = 0$ in $Z_n$, we conclude that $diam$($k$-$G(H)$) = $2w$ =  $2(i + 1)$ by Corollary \ref{C3}(2).
	\end{proof}
	\begin{thm}\label{DIM2}
		Let $H$ be a finite cyclic group of order $n > 2$, i.e., $H \cong Z_n$, $m > 1$ be an integer, and $k = gcd(m, n)$.  Assume the $m$-$G(H)$ is connected, i.e., every prime factor of $n$ is a prime factor of $m$. Then $diam$($m$-$G(H)$) ($diam$($k$-$G(H))) = 2$ if and only either $k = 2$ and $n = 4$ or $k = n$ and $n \geq 3$.
	
	\end{thm}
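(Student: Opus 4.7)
The plan is to exploit the graph-isomorphism $m\text{-}G(H) \cong k\text{-}G(H)$ from Theorem \ref{Tciso}, so that it suffices to determine exactly when $\mathrm{diam}(k\text{-}G(H)) = 2$. I will split into the two cases $k = n$ and $k \neq n$, and then read off the answer from the classification in Corollary \ref{C3}.

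First, I handle the easy direction. If $k = n$ and $n \geq 3$, then by Theorem \ref{DIM0}(2) the $k$-$G(H)$ is the star $K_{1, n-1}$, which has diameter $2$ whenever $n - 1 \geq 2$. If $k = 2$ and $n = 4$, then letting $w$ be the least positive integer with $k^w \equiv 0 \pmod n$ gives $w = 2$, and Corollary \ref{C3}(4) (the case $n$ even, $k = 2$) yields $\mathrm{diam}(k\text{-}G(H)) = 2(w-1) = 2$. This establishes the ``if'' direction.

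For the converse, assume $\mathrm{diam}(k\text{-}G(H)) = 2$. If $k = n$, then since $n > 2$, we are done with $n \geq 3$. Otherwise $k \neq n$, in which case, letting $w$ be the least positive integer such that $k^w \equiv 0 \pmod n$, we must have $w \geq 2$ (because $k \neq n$ forces $k^1 \not\equiv 0$). By Corollary \ref{C3}(1), the diameter lies in the set $\{2w, \ 2w - 1, \ 2(w-1)\}$. Setting each in turn equal to $2$: the value $2w = 2$ forces $w = 1$, contradicting $w \geq 2$; the value $2w - 1 = 2$ has no integer solution; the value $2(w - 1) = 2$ forces $w = 2$, and by Corollary \ref{C3}(4) this occurs exactly when $n$ is even and $k = 2$, giving $n = 2^w = 4$. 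Thus $k = 2$ and $n = 4$, completing the proof.

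The whole argument is essentially bookkeeping once Theorem \ref{Tciso} and Corollary \ref{C3} are in hand; the only mild subtlety is remembering that $w \geq 2$ whenever $k \neq n$, which is what rules out the $2w = 2$ branch and forces the unique non-star solution to be $(k, n) = (2, 4)$.
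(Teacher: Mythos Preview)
Your proof is correct and follows essentially the same strategy as the paper's: verify the ``if'' direction by direct citation, and for the converse set the classified diameter formula equal to $2$ and solve. The only cosmetic difference is that you invoke Corollary~\ref{C3} (parametrized by $w$), whereas the paper invokes the equivalent Theorem~\ref{CDIM} (parametrized by $q$ and $i$, with $n = qk^i$), arriving at $q=1$ and $i\in\{1,2\}$ instead of your $w\in\{1,2\}$ dichotomy.
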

	\begin{proof}
		Since $gcd(n, m) = k$, $n = qk^i$ for some positive integers $1 \leq q < k$ and $i \geq 1$. Assume that $k = 2$ and $n = 4$. Then $|H| = 2^2$, and hence $diam$($2$-$G(H)$) = $2 (2-1) = 2$ by Theorem \ref{DIM0}(3). Assume that $k = n$ and $n \geq 3$. Then the $n$-$G(H)$ $\cong$ $K_{1, n-1}$, and thus  $diam$($m$-$G(H)$) = $2$. Conversely, suppose that $diam$($m$-$G(H)$) = $2$. Then by Theorem \ref{CDIM}, we conclude that $q = 1$, $n = k^i$, and $1 \leq i \leq 2$. Assume that $i = 1$. Since $n > 2$, we have $n = k \geq 3$.  Assume that $i = 2$. Then by Theorem \ref{CDIM}((1) and (2)), we conclude that $k = 2$ and $|H| = 4$.  
		\end{proof}
\begin{exa}\label{E4}
Let $H = Z_{20}$, and $k = 10$. Then the $10$-$G(H)$ is connected by Theorem \ref{T1}. Since $20 = 2k$, we conclude that $w = 2$ is the least positive integer such that $20 \mid k^2$. Since $k > 2$, we conclude the diameter of $10-G(H)$ is $2w - 1 = 3$ by Theorem \ref{CDIM}(4). For example, $1-10-0-2$ is a path of length $3$.  
\end{exa}

In the following result, we show that for every integer $d \geq 1$, there is an $m > 1$ and a cyclic group $H$ of order $n$ such that the diameter of the $m$-$G(H)$ is $d$. 

\begin{thm}\label{T4} Let $d \geq 1$. Then there is an $m > 1$ and a cyclic group $H$ of order $n$ such that the $m$-$G(H)$ is connected and has diameter $d$. 
		\end{thm}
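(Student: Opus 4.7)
The plan is to split into cases by the parity of $d$ and, in each case, exhibit an explicit cyclic group $H$ together with an integer $m>1$ realizing diameter $d$; connectedness follows from Theorem \ref{T1} in every construction, since each choice will have every prime factor of $n$ appearing among the prime factors of $m$. The base case $d=1$ is handled by $H = Z_2$ and $m = 2$, where $m$-$G(H) \cong K_{1,1}$ by Theorem \ref{DIM0}(1).

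For every even $d = 2j$ with $j \geq 1$, I would take $H = Z_{3^j}$ and $m = 3$, so that $k = \gcd(3, 3^j) = 3 > 2$ and $n = k^j$. Theorem \ref{DIM0}(4) (specializing to Theorem \ref{DIM0}(2) when $j = 1$, in which case $n = k$) then yields $diam$($m$-$G(H)$) $= 2j = d$.

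For every odd $d = 2j - 1$ with $j \geq 2$, I would take $H = Z_{3 \cdot 2^j}$ and $m = 6$, so that $k = 6$ and $n = 2^j \cdot 3$. The plan is to invoke Theorem \ref{DIM0}(7)(b), whose hypotheses are: $n$ is even with $n \neq k$; $j$ is the least positive integer with $n \mid k^j$; $2 k^{j-1} = 0$ in $Z_n$; and $k \neq 2$. The first and last are immediate for $j \geq 2$. For the second, $k^j = 2^j \cdot 3^j$ is divisible by $n$, while $k^{j-1} = 2^{j-1} \cdot 3^{j-1}$ is not, since the $2$-adic valuation of $n$ exceeds that of $k^{j-1}$. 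For the third, $2 k^{j-1} = 2^j \cdot 3^{j-1}$, and $n \mid 2 k^{j-1}$ precisely because $j \geq 2$ forces $3 \mid 3^{j-1}$. Hence Theorem \ref{DIM0}(7)(b) gives $diam$($m$-$G(H)$) $= 2j - 1 = d$.

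The main obstacle is the odd case: by Corollary \ref{C3}, odd diameters arise exclusively from the configuration of Theorem \ref{DIM0}(7)(b), which requires four compatible conditions on $n$ and $k$ to hold simultaneously. The family $n = 3 \cdot 2^j$, $k = 6$ is arguably the simplest choice meeting all of them, and the remaining verification reduces to routine bookkeeping with $2$-adic and $3$-adic valuations.
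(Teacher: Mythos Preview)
Your proof is correct and takes essentially the same approach as the paper: split by the parity of $d$, handle $d=1$ with $Z_2$, and for each parity exhibit an explicit one-parameter family whose diameter is read off from the formulas in Theorem~\ref{DIM0} (equivalently Theorem~\ref{CDIM}). The only difference is cosmetic: the paper uses $H=Z_{6^i}$, $k=6$ for even $d=2i$ and $H=Z_{2\cdot 6^i}$, $k=6$ for odd $d=2i+1$ (citing Theorem~\ref{CDIM}(2),(4)), whereas you use $Z_{3^j}$ with $k=3$ and $Z_{3\cdot 2^j}$ with $k=6$ and cite Theorem~\ref{DIM0} directly.
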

\begin{proof} Assume $d$ is an odd integer. If $d = 1$, then the $2$-$G(Z_2)$ is connected and has  diameter $d = 1$. Assume that $d \not = 1$. Since $d$ is odd, $d = 2i + 1$ for some integer $i \geq 1$. Let $n = 2(6^{i})$, $H = Z_n$, and $m = k = 6$. Then the $m$-$G(H)$ is connected by Theorem \ref{T1} and has diameter $2(i + 1) - 1 = 2i + 1 = d$ by Theorem \ref{CDIM}(4). Assume $d \geq 2$ is an even integer. Then $d = 2i$ for some integer $i \geq 1$. Let $n = 6^{i}$, $H = Z_n$, and $m = k = 6$. Then the $m$-$G(H)$ is connected by Theorem \ref{T1} and has diameter $2i = d$ by Theorem \ref{CDIM}(2).

\end{proof}

In the following results, we demonstrate that certain trees can be represented as the $m$-$G(H)$ of a cyclic group $H$ for some $m \geq 2$. 

\begin{thm}\label{Tree1}
Let $T$ be a tree with a root vertex $v_0$ that has a degree $d$ for some integer $d \geq 1$. Suppose that $v_0$ is not adjacent to every vertex of degree $1$, and $T$ has exactly $d$ vertices, each of which has a degree of $d + 2$.  All other vertices in $T$ are of degree $1$. Then $T$ can be represented as the $d+1$-$G(Z_n)$, where $n = (d+1)^2$. Furthermore, $diam(T)$ = $2$ if $d = 1$, and $diam(T)$ = $4$ if $d\not = 1$. 
\end{thm}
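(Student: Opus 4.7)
The plan is to take $H = Z_n$ with $n = (d+1)^2$ and $m = d+1$, so that $k = \gcd(m,n) = d+1$ and $q = n/k = d+1$. Since every prime factor of $n$ divides $k$, Theorem \ref{T1} gives that the $m$-$G(H)$ is connected, Theorem \ref{T3} that it is a tree, and Corollary \ref{C2} pins down the degree sequence: $\deg(0) = k-1 = d$; the $q-1 = d$ nonzero multiples $k, 2k, \ldots, dk$ each have degree $k+1 = d+2$; and the remaining $n-q = d(d+1)$ vertices are leaves. This already matches the global invariants of $T$: vertex count $(d+1)^2$, one vertex of degree $d$, exactly $d$ vertices of degree $d+2$, and $d(d+1)$ leaves.

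Next I would describe the adjacencies explicitly. Because $k \cdot (jk) = jk^2 \equiv 0 \pmod n$, the neighbors of $0$ are exactly the $d$ nonzero multiples of $k$. For each $b_j := jk$, the neighbors of $b_j$ are $0$ together with the $k = d+1$ solutions in $Z_n$ to $kx = jk$, which form the coset $\{j, j+k, \ldots, j+dk\}$ of non-multiples of $k$, hence all leaves. So the $m$-$G(H)$ has the shape of a ``double star'': the center $0$ is joined to $d$ hubs $b_1, \ldots, b_d$, each hub is joined to $0$ and to its own $d+1$ private leaves, and no two hubs are adjacent. To transport this structure to $T$, I read the hypothesis on $v_0$ as ``$v_0$ has no leaf neighbors''; then all $d$ neighbors of $v_0$ must lie among the $d$ vertices $u_1, \ldots, u_d$ of degree $d+2$, so $v_0$ is adjacent to every $u_i$. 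Since the non-leaves of $T$ induce a subtree on $d+1$ vertices and the $d$ edges $v_0 u_i$ already account for all its edges, no two $u_i$'s are adjacent, and each $u_i$ sends its remaining $d+1$ edges to distinct leaves. The bijection $v_0 \mapsto 0$, $u_i \mapsto ik$, extended arbitrarily on the leaf-sets, is the required graph isomorphism.

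For the diameter: when $d = 1$ the tree is $K_{1,3}$ centered at the single hub, so $\mathrm{diam}(T) = 2$. For $d \geq 2$, every path in $T$ has the form $\mathrm{leaf} - \mathrm{hub} - v_0 - \mathrm{hub}' - \mathrm{leaf}'$ with distinct hubs, hence length at most $4$, and this length is realized by any two leaves attached to distinct hubs; so $\mathrm{diam}(T) = 4$. The main obstacle I expect is the interpretation of the hypothesis on $v_0$ and using it to conclude that the non-leaf subtree is the star centered at $v_0$; without this, the degree sequence alone does not pin down $T$ up to isomorphism, since, for example when $d = 2$, the alternative configuration in which the non-leaves induce a path $v_0 - u_1 - u_2$ would give another tree with the same degree multiset.
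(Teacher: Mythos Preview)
Your proposal is correct and follows essentially the same approach as the paper: both interpret the hypothesis on $v_0$ as ``$v_0$ has no leaf neighbors,'' deduce that the $d$ vertices of degree $d+2$ are exactly the neighbors of $v_0$ and are pairwise non-adjacent, and then match this structure against the $(d+1)$-$G(Z_{(d+1)^2})$ via Theorem~\ref{T2}. The only differences are cosmetic: you build the isomorphism explicitly and read off the diameter by inspection, whereas the paper appeals to ``same graphical properties'' for the isomorphism and to Theorem~\ref{CDIM}(2),(3) for the diameter (with $n=k^2$, $q=1$, $i=w=2$).
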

\begin{proof}
	Let $V$ be the set of all vertices of $T$, and let $F = \{v \in V | deg(v) = d+2\}$. Then $|F| = d$. Since $deg(v_0) = d$ and $v_0$ is not adjacent to every vertex of degree $1$, we conclude that $v_0$ is adjacent to every vertex in $F$. Hence, no two vertices in $F$ are adjacent, and every vertex $v \in V \setminus \{v_0\}$  of degree $1$ is adjacent to exactly one vertex in $F$. Thus, $|V| = d(d+1) + |F| + 1 = (d+1) ^2$. Since the $d+1$-$G(Z_{|V|})$ shares the same graphical properties as $T$ by Theorem \ref{T2}, we can express  $T$ as the $d+1$-$G(Z_n)$, where $n = |V| = (d + 1)^2$. By Theorem \ref{CDIM}((2) and (3)), it follows that $diam(T)$ = $2$ if $d = 1$, and $diam(T)$ = $4$ if $d\not = 1$. 
	
\end{proof}
The following result demonstrates that not every tree can be represented as the $m$-$G(H)$ of a cyclic group $H$ for some $m \geq 2$.
\begin{thm}\label{NoTree}
	Let $T$ be a tree with a root vertex $v_0$ that has a degree of $2^d - 1$ for some odd integer $d \geq 1$, and let $V$ be the set of all vertices of $T$. If $|V| = (2^d)b$ for some odd integer $b\geq 3$. Then $T$ cannot be represented as the $m$-$G(H)$ of a cyclic group $H$ for some $m \geq 2$.
\end{thm}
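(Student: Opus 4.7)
The plan is to argue by contradiction under the convention (implicit in Theorem \ref{Tree1}) that ``representing $T$ as the $m$-$G(H)$'' means exhibiting a graph isomorphism $T \to m$-$G(H)$ sending the distinguished root $v_0$ to the identity $0$ of $H$; this identification is forced because, by Theorem \ref{T2}, the identity is the unique vertex of $m$-$G(H)$ whose degree equals $k-1$ (as opposed to $1$ or $k+1$).

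So suppose such a representation exists with $H \cong Z_n$ of order $n = |V| = 2^d b$. Since $T$ is a tree it is connected, hence so is $m$-$G(H)$; Theorem \ref{T1} then yields that every prime factor of $n$ must also divide $k = \gcd(m, n)$. Meanwhile, identifying $v_0$ with $0$ and applying Theorem \ref{T2}(1),
\[ k - 1 \;=\; \deg(v_0) \;=\; 2^d - 1, \qquad \text{so } k = 2^d. \]

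Combining the two facts, every prime divisor of $n = 2^d b$ must divide $k = 2^d$, which forces $n$ to be a power of $2$. However $b \geq 3$ is odd and therefore has some odd prime divisor $p$; this $p$ divides $n$, contradicting that $n$ is a power of $2$. Hence no such representation of $T$ can exist.

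The principal obstacle is conceptual rather than technical: the key point is to see that, by the degree classification of Theorem \ref{T2}, the root $v_0$ must correspond to the identity under any purported isomorphism, so that its degree equals $k-1$. Once this identification is made the remainder is a one-line prime-factorization argument; notably, the hypothesis that $d$ is odd appears not to be used anywhere in this line of reasoning.
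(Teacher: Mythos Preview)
Your proof is correct and follows essentially the same line as the paper's: assume a representation exists, use connectedness plus Theorem~\ref{T1} to force every prime of $n$ to divide $k$, read off $k = 2^d$ from $\deg(0) = k-1$ via Theorem~\ref{T2}(1), and obtain a contradiction from the odd prime in $b$. Your observation that the oddness of $d$ is never used is accurate (the paper's proof does not use it either), and your explicit discussion of why $v_0$ should correspond to $0$ is a point the paper simply assumes without comment. One small caveat: your remark that $0$ is the \emph{unique} vertex of degree $k-1$ fails when $k=2$ (then $\deg(0)=1$ coincides with the leaf degree), but since you are adopting the root-to-identity convention outright rather than deriving it, this does not affect the argument.
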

\begin{proof}
	Assume that $T$ can be represented as the $m$-$G(H)$ of a cyclic group $H$ for some $m \geq 2$. Then $H| = |V| = b(2^d)$, every prime factor $|H|$ is a prime of $k = gcd(m, |H|)$ by Theorem \ref{T1}, and $deg(0) = k - 1 = 2^d - 1$ by Theorem \ref{T2}. Thus $k = 2^n$. Since every odd prime factor of $n$ is not a factor of $k$, the $m$-$G(H)$ is disconnected by Theorem \ref{T1}. Thus, $T$ cannot be represented as the $m$-$G(H)$ of a cyclic group $H$ for some $m \geq 2$. 

\end{proof}

We have the following result.
\begin{thm}\label{Tree2}
	Let $T$ be a tree with a root vertex $v_0$ that has a degree of $k-1 \geq 3$ for some even integer $k\geq 4$, and $T$ has exactly $2k - 1$ vertices, each of which has a degree of $k + 1$.  Let $V$ be the set of all vertices of $T$, and $F = \{v \in V \mid deg(v) = k+1\}$ . Suppose that one vertex in $F$ is adjacent to $k$ vertices in $F$, and every vertex adjacent to $v_0$ has a degree of $k+1$.  All other vertices in $T$ are of degree $1$. Then $T$ can be represented as the $k$-$G(Z_n)$, where $n = 2k^2$. Furthermore, $diam(T)$ = $5$.
\end{thm}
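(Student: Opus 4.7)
The plan is to take $n = 2k^2$ and show that the $k$-$G(Z_n)$ realizes exactly the tree structure prescribed for $T$; the diameter claim will then follow from Theorem \ref{CDIM}(4). First I would note that since $k \ge 4$ is even, every prime factor of $n = 2k^2$ divides $k$, so Theorem \ref{T1} ensures the $k$-$G(Z_n)$ is connected and Theorem \ref{T3} upgrades this to a tree. Theorem \ref{T2} then pins down the degree sequence: $\deg(0) = k - 1$, the $n/k - 1 = 2k - 1$ nonzero multiples of $k$ each have degree $k + 1$, and the remaining $2k^2 - 2k$ nonzero elements are leaves. So the sizes of $\{v_0\}$, $F$, and the leaf set already match those forced by the hypotheses on $T$.

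The heart of the proof is to locate the distinguished vertex $v^*$ adjacent to $k$ vertices in $F$. The key arithmetic, and where evenness of $k$ is crucially used, is that $k^2 = (k/2)(2k)$ is a nonzero multiple of $n/k = 2k$ in $Z_n$. This has two consequences that together pin down $v^* := k^2$: on the one hand $k \cdot k^2 = (k/2) \cdot 2k^2 \equiv 0 \pmod{n}$, so $k^2$ and $0$ are adjacent; on the other hand, the solutions to $k x = k^2$ in $Z_n$ are exactly $\{k, 3k, 5k, \ldots, (2k-1)k\}$, the $k$ odd multiples of $k$, all of which lie in $F$. Hence $v^*$ has $k + 1$ neighbors, namely $0$ together with all $k$ odd multiples of $k$, giving it exactly $k$ neighbors in $F$.

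Next I would verify the remaining adjacencies with Lemma \ref{L1} and Theorem \ref{T2}: the $k - 1$ neighbors of $0$ are the nonzero multiples of $n/k = 2k$, all in $F$, which confirms the hypothesis that every neighbor of $v_0$ has degree $k + 1$; each of the $k - 2$ even multiples of $k$ other than $k^2$ is adjacent to $0$ (since $c k^2 \equiv 0$ for $c$ even) and to $k$ leaves (the $k$ solutions of $k x = c k$, none of which are multiples of $k$ when $c$ is even but not a multiple of $k$); and each of the $k$ odd multiples of $k$ is adjacent to $k^2$ (since $c k^2 \equiv k^2$ for $c$ odd) and to $k$ leaves. This exhibits $k$-$G(Z_n)$ as a tree with exactly the prescribed shape, so $T$ can be represented as this graph. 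For the diameter, writing $n = 2 \cdot k^2 = q k^i$ with $i = 2$ and $q = 2$, and noting $k \ne 2$, Theorem \ref{CDIM}(4) yields $\mathrm{diam}(T) = 2 i + 1 = 5$.

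The main obstacle is the arithmetic identification of $v^*$: one needs to use evenness of $k$ in precisely the right places, both to place $k^2$ as a neighbor of $0$ via $k^3 \equiv 0 \pmod{2k^2}$ and to make the parity split of multiples of $k$ into ``even'' (adjacent to $0$) and ``odd'' (adjacent to $k^2$) come out correctly so that $v^*$ ends up with exactly $k$ neighbors in $F$ and no more. Once $v^*$ is correctly situated, the rest is a routine parity check that each adjacency in the prescribed tree corresponds to an edge in $k$-$G(Z_n)$, and the diameter conclusion is a direct application of Theorem \ref{CDIM}.
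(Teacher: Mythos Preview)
Your proposal is correct and follows essentially the same strategy as the paper: verify that the $k$-$G(Z_{2k^2})$ is a tree whose degree data and local adjacency pattern match the hypotheses on $T$, and then read off the diameter from Theorem~\ref{CDIM}(4). The paper differs only in presentation: it first works from the abstract hypotheses on $T$ to count $|V|=2k^2$ and then invokes Theorem~\ref{T2} in one line (``shares the same graphical properties''), whereas you start from $n=2k^2$ and explicitly locate the distinguished vertex $v^*=k^2$, using evenness of $k$ to check that its $k$ $F$-neighbours are precisely the odd multiples of $k$; this extra verification is something the paper leaves implicit.
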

\begin{proof}
	 Let $a$ be the vertex in $F$ that is adjacent to $k$ vertices in $F$. Hence, $deg(v_0) + deg(a) = |F| = 2k - 1$. Let $v$ be a vertex of $T$ that has a degree of $1$. Then, either $v$ is adjacent to a vertex $b \in F\setminus \{a\}$ such that $b$ is adjacent to $v_0$ or $v$ is adjacent to a vertex $c \in F\setminus \{a\}$ such that $c$ is adjacent to $a$. Hence, the number of all vertices of $T$ that have a degree of $1$ is $(k-2 + k)(k) = (2k - 2)(k) = 2k^2 - 2k$. Hence, $|V| = 2k^2 - 2 + |F| + 1 = 2k^2 - 2k + 2k -1 + 1 = 2k^2$. Since the $k$-$G(Z_{|V|})$ shares the same graphical properties as $T$ by Theorem \ref{T2}, we can express  $T$ as the $k$-$G(Z_n)$, where $n = |V| =2k^2$. By Theorem \ref{CDIM}(4), it follows that $diam(T)$ = $2(2) + 1 = 5$.
	
\end{proof}
\begin{remk}\label{rng}
Let $n = p_1^{n_1}\cdots p_m^{n_m}$, where the $p_i$'s are distinct prime integers, and each $n_i \geq 1$ for every $1\leq i \leq m$. Let $S = \{k \mid n : p_i \mid k$, for every $1\leq i \leq m\}$. Then, it is clear that $|S| = n_1\cdots n_m$. Thus, there are exactly $n_1\cdots n_m$ distinct connected $k$-$G(Z_n)$ by Theorem \ref{T1}.
	\end{remk}
\begin{exa}
	Let $n = 72 = 3^22^3$. Hence by Remark\ref{rng}, we have 6 distinct connected $k$-$G(Z_{72})$. All possible values of $k$ are $72, 36, 24, 18, 12, 6$. Let $k = 72$. Since the $72$-$G(Z_{72})$ = $K_{1, 71}$, the $diam$($72$-$G(Z_{72})$ = $2$. Let $k = 36$. Then $n = 72 = 2k$. Thus, the $diam$($36$-$G(Z_{72})$ = $3$ by Theorem \ref {CDIM}(4). Let $k = 24$. Then $n = 72 = 3k$. Thus, the $diam$($24$-$G(Z_{72})$ = $4$ by Theorem \ref {CDIM}(5). Let $k = 18 $. Since $n = 72 = 4k$, we have the $diam$($18$-$G(Z_{72})$ = $4$ by Theorem \ref {CDIM}(5). Let $k = 12$. Then $n = 72 = 6k$. Thus, the $diam$($12$-$G(Z_{72})$ = $4$ by Theorem \ref {CDIM}(5). Let $k = 6$. Since $n = 72 = 2k^2$, we have the $diam$($18$-$G(Z_{72})$ = $5$ by Theorem \ref {CDIM}(4).

\end{exa}

\section{Properties of the $m$-$G(H)$ when $H$ is not cyclic.}

We start with the following result.

\begin{thm}\label{T5}
	Let $n_1, ..., n_i$  be positive integers such that $i \geq 2$,  $gcd(n_e, n_j) \not = 1$ for every $e \not = j$, $1 \leq e, j \leq i$, and $n = n_1n_2 \cdots n_i$. Let $m$ be a positive integer such that every prime factor of $n$ is a prime factor of $m$, and $k = gcd(n, m)$. For each $1 \leq j \leq i$, let $d_j = gcd(m, n_j)$ and $n_j = q_jkd_j$ for some positive integer $q_j$. Let $H$ be the abelian group $Z_{n_1}  \times \cdots \times Z_{n_i}$. Then the following statements hold.
	\begin{enumerate}
		\item  $H$ is not cyclic and the $m$-$G(H)$ is connected.
		\item $deg((0, ..., 0)) = d_1d_2\cdots d_i  - 1 \geq 2^i -1$.
		\item Let $(a_1, ..., a_i) \in H$. If $d_j \nmid a_j$ (in $Z$) for some $1 \leq j \leq i$, then $deg((a_1, ..., a_i)) = 1$.
		\item Let $(a_1, ..., a_i) \in H \setminus \{ (0, ..., 0)\}$. If $d_j \mid a_j$ for every $1 \leq j \leq i$, then $deg((a_1, ... , a_i) = d_1d_2 \cdots d_i + 1$.
		\item There are exactly $q_1q_2 \cdots q_i - 1$ distinct elements in $H$ of degree $d_1d_2 \cdots d_i + 1$.
		\item There are exactly $n - q_1q_2 \cdots q_i$ distinct elements in $H$ of degree 1.
		\item The $m$-$G(H)$ is a tree.
	\end{enumerate}
\end{thm}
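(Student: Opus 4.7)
The plan is to handle the seven claims in sequence, lifting coordinatewise the arguments from Theorem \ref{T2} and Theorem \ref{T3} (the cyclic case), then closing with an Euler-type edge count for the tree conclusion.

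For (1), I would observe that $H = Z_{n_1} \times \cdots \times Z_{n_i}$ is cyclic if and only if the $n_j$ are pairwise coprime (Chinese Remainder Theorem), and since by hypothesis $\gcd(n_e, n_j) \ne 1$ for some pair $e \ne j$, $H$ is not cyclic. Connectedness of $m$-$G(H)$ is immediate from Theorem \ref{T1}, because $|H| = n$ and every prime factor of $n$ divides $m$ by hypothesis.

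For (2)--(4), the key observation is that the neighbors of $a = (a_1, \ldots, a_i)$ in $m$-$G(H)$ are the solutions $b \ne a$ of $mb = a$ together with $ma$ (when $ma \ne a$), and that the equation $mb = a$ in $H$ decouples into $i$ independent equations $m b_j = a_j$ in $Z_{n_j}$. Each $n_j$ shares its prime factors with $n$, hence with $m$, so Lemma \ref{L1} applies in each coordinate: $m b_j = a_j$ has a solution in $Z_{n_j}$ if and only if $d_j \mid a_j$, and there are then exactly $d_j$ solutions; moreover $m a_j = a_j$ forces $a_j = 0$. Combining across coordinates: if some $d_j \nmid a_j$, then $a$ has only the single neighbor $ma$, giving $\deg(a) = 1$ and proving (3); if all $d_j \mid a_j$ and $a \ne 0$, the solution set has size $d_1 \cdots d_i$ and is disjoint from $\{ma\}$ (using that $m^2 a \ne a$ for $a \ne 0$, which follows coordinatewise from $\gcd(m^2 - 1, n_j) = 1$), giving (4); and for $a = 0$ the $d_1 \cdots d_i$ solutions include $0$ itself, leaving $\deg(0) = d_1 \cdots d_i - 1$, with $d_j \ge 2$ for each $j$ since $n_j > 1$ forces some prime factor of $n_j$ to divide $d_j$, yielding the bound $\deg(0) \ge 2^i - 1$ in (2).

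Parts (5) and (6) are then a bookkeeping exercise: the elements of degree $d_1 \cdots d_i + 1$ are the nonzero tuples with $d_j \mid a_j$ for every $j$, which number exactly $q_1 \cdots q_i - 1$ (from the relation between $q_j, d_j$ and $n_j$), proving (5); the remaining $n - 1 - (q_1 \cdots q_i - 1) = n - q_1 \cdots q_i$ nonzero vertices all have degree $1$ by (3), proving (6). Finally for (7), summing degrees using (2), (4), (5), (6) gives
\[
(d_1\cdots d_i - 1) + (q_1\cdots q_i - 1)(d_1\cdots d_i + 1) + (n - q_1\cdots q_i) \;=\; 2n - 2,
\]
since $d_1 \cdots d_i \cdot q_1 \cdots q_i = n_1 \cdots n_i = n$; hence $|E| = n - 1$, and combined with connectedness from (1), this forces $m$-$G(H)$ to be a tree. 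The main delicate point I anticipate is in (4): making sure that $ma$ is not already among the $d_1 \cdots d_i$ preimages of $a$ under $x \mapsto mx$, which reduces to the coordinatewise claim $m^2 a_j \ne a_j$ whenever $a_j \ne 0$, a small but essential extension of Lemma \ref{L1}.
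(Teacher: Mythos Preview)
Your proposal is correct and follows essentially the same route as the paper: coordinatewise application of Lemma~\ref{L1} for the degree computations in (2)--(4), the direct counting in (5)--(6), and the handshake edge count $\sum \deg(v)=2(n-1)$ for the tree conclusion in (7). You are in fact slightly more careful than the paper in (4), where you explicitly rule out $ma$ lying among the $d_1\cdots d_i$ preimages of $a$ via $\gcd(m^2-1,n_j)=1$; the paper's own proof tacitly assumes this disjointness.
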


\begin{proof}
	 First, for every $1 \leq j \leq i$, observe that $d_j \geq 2$ and every prime factor of $n_j$ is a prime factor of $d_j$.
	 \vskip0.1in
	{\bf (1)}. It is clear that $k \mid n$ and every prime factor of $n$ is a prime factor of $k$. Hence by Theorem \ref{T1}, the $m$-$G(H)$ is connected. Since the $gcd(n_e, n_j) \not = 1$ for every $e \not = j$, $1 \leq e, j \leq i$, we conclude that $H$ is not cyclic.
	\vskip0.1in
	{\bf (2)} The equation $mX = m(x_1,..., x_i) = (mx_1, ... , mx_i) = (0, ..., 0)$ has a solution in $H$ if and only if $(d_1x_1, ... , d_ix_i) = (0, ... , 0)$ has a solution in $H$ by Lemma \ref{L1}. For each $1 \leq j \leq i$, the equation $d_jx_j = 0$ has exactly $d_j \geq 2$ distinct solutions (including 0) in $Z_{n_j}$ by Lemma \ref{L1}. Thus the number of distinct nonzero elements that are adjacent to $(0, ... , 0)$ is $d_1d_2 \cdots d_i - 1$. Since $d_j \geq 2$ for every $1\leq j \leq i$ and $i \geq 2$, we have $deg((0, ... , 0)) = d_1d_2 \cdots d_i - 1 \geq 2^i - 1$.

{\bf (3)}. Let $(a_1, ..., a_i) \in H$ and assume that $d_j \nmid a_j$ (in $Z$) for some $1 \leq j \leq i$. Hence $(a_1, ... , a_i) \not = (0, ... , 0)$. Since $d_jx_j = a_j$ has no solution in $Z_{n_j}$ by Lemma \ref{L1}, we conclude that $mX = (mx_1, ..., mdx_i) = (a_1, ... , a_i)$ has no solution in $H$. Since $d_j \nmid a_j$, $a_j \not = 0$. Thus $m(a_1, ..., a_j, ..., a_i) \not = (a_1, ..., a_j, ... , a_i)$ by Lemma \ref{L1}. Hence $m(a_1, ..., a_j, ..., a_i)$ is the only element in $H$ that is adjacent to $(a_1, ..., a_j, ..., a_i)$. Thus $deg((a_1, ..., a_j, ..., a_i)) = 1$.
\vskip0.1in
{\bf (4)}. Let $(a_1, ..., a_i) \in H \setminus \{ (0, ..., 0)\}$ and assume that $d_j \mid a_j$ for every $1 \leq j \leq i$. The equation $mX = m(x_1,..., x_i) = (mx_1, ... , mx_i) = (a_1, ..., a_i)$ has a solution in $H$ if and only if $(d_1x_1, ... , d_ix_i) = (a_1, ... , a_i)$ has a solution in $H$ by Lemma \ref{L1}. For each $1 \leq j \leq i$, the equation $d_jx_j = a_j$ has exactly $d_j$ distinct solutions  in $Z_{n_j}$ by Lemma \ref{L1}. Thus there are $d_1d_2 \cdots d_i$ distinct elements in $H$ that are adjacent to $(a_1, ... , a_i)$. Since $m(a_1, ..., a_i) \not = (a_1, ..., a_i)$ by Lemma \ref{L1}, we conclude that $m(a_1, ..., a_i)$ is adjacent to $(a_1, ..., a_i)$. Thus $deg((a_1, ... , a_i) = d_1d_2 \cdots d_i + 1$.
\vskip 0.1in
{\bf (5)}. Let $L = \{(a_1, ..., a_i) \in H \mid \   d_j \mid a_j$ \ for every $1 \leq j \leq i\}$. Let $w \in L$, then $w = (c_1d_1, ..., c_id_i)$ such that for each $1 \leq j \leq i$, we have $1 \leq c_j \leq q_j$. Hence $|L| = q_1q_2 \cdots q_i$. Thus $|L \setminus (0, ... , 0)| = q_1q_2 \cdots q_i - 1$. Thus there are exactly $q_1q_2 \cdots q_i - 1$ distinct elements in $H$ of degree $d_1d_2 \cdots d_i + 1$.

\vskip 0.1in
{\bf (6)}.  Let $L = \{(a_1, ..., a_i) \in H \mid \   d_j \mid a_j$ \ for every $1 \leq j \leq i\}$ and $U = \{(a_1, ..., a_i) \in H \mid \   d_j \nmid a_j$ \ for some $1 \leq j \leq i\}$. It is clear that $L \cup U = H$ and $L \cap U = \{\}$. Since $|L| = q_1q_2 \cdots q_i$ by (5) and $|H| = n$, we conclude that $|U| = n - |L| = n - q_1q_2 \cdots q_i$.  Since each element in $U$ is of degree one by (3), we conclude that there are exactly $n - q_1q_2 \cdots q_i$ distinct elements in $H$ of degree 1.

\vskip 0.1in
{\bf (7)}. Let $E$ be the set of all edges of $m$-$G(H)$. We show $|E| = |H| - 1 = n - 1$. It is known that $|E| = \frac{\sum_{v \in H} deg(v)}{2}$. Let $L$ and $U$ as in (6). Then by (2) - (6), we have
$$\sum_{v \in H} deg(v) = \sum_{v \in L} deg(v) + \sum_{v \in U} deg(v) = (q_1q_2 \cdots q_i - 1)(d_1d_2 \cdots d_i + 1) + 1(d_1d_2\cdots d_i - 1) + $$\\ $$(n - q_1q_2 \cdots q_i)1 = q_1q_2 \cdots q_i d_1d_2\cdots d_i + q_1q_2 \cdots q_i - d_1d_2\cdots d_i - 1 + d_1d_2\cdots d_i - 1 +$$ \newline   $$n - q_1q_2\cdots q_i = 2n - 2$$.

Hence $|E| = \frac{2n - 2}{2} = |H| -1 = n-1$. Since the $m$-$G(H)$ is connected and $|E| = |H| - 1|$, we conclude that the $m$-$G(H)$ is a tree.

\end{proof}
\begin{defi}
	Let $m_1, ..., m_i$ be positive integers such that $i \geq 2$, $m_1 \geq 2$, and $m_1 \mid m_2 \mid \cdots \mid m_i$. Let $H = Z_{m_1} \oplus \cdots \oplus Z_{m_i}$. For each $1 \leq j \leq i$, let $d_j \geq 2$ be a factor of $n_j$ such that every prime factor of $n_j$ is a prime factor of $d_j$. Let $d = d_1 \cdots d_i$. We define the {\it $(d_1, ..., d_i)$-product graph} on $H$, denoted by $(d_1, ... , d_i)$-$PG(H)$, to be undirected simple graph with vertex set $H$ and two distinct vertices $x = (x_1, ..., x_i), y = (y_1, ... , y_i) \in H$ are adjacent if $y=(d_1x_1, ..., d_ix_i)$ or $x = (d_1y_1, ... , d_iy_i)$. Since each $d_j$-$G(Z_{m_j})$, $1\leq j \leq i$, is connected by Theorem \ref{T1}, it is easy to see that the $(d_1, ... , d_i)$-$PG(H)$ is connected.
		\end{defi}
		\begin{remk}\label{r1}
			Let $H$ be a non-cyclic abelian group of order $n > 1$ such that $n$ is not a prime integer. It is known that there are positive integers, $m_1, ..., m_i$, such that $i \geq 2$, $m_1 \geq 2$, $m_1 \mid m_2 \mid \cdots \mid m_i$, $n = m_1\cdots m_i$, and $H\cong Z_{m_1} \oplus \cdots \oplus Z_{m_i}$.
			\end{remk}
			\begin{thm}\label{TNCiso}
				Let $H$ be a non-cyclic abelian group of order $n >1$ such that $n$ is not a prime integer, i.e., $H \cong S = Z_{m_1} \oplus \cdots \oplus Z_{m_i}$ such that $i \geq 2$, $m_1 \geq 2$, $m_1 \mid m_2 \mid \cdots \mid m_i$, and $n = m_1\cdots m_i$ by Remark \ref{r1}. Let $m > 1$ be a positive integer such that every prime factor of $m$ is a prime factor of $n$, and $k = gcd(m, n)$. For each $1\leq j \leq i$, let $d_j = gcd(m, n_j) = gcd(k, n_j)$. Then the $m$-$G(H)$ $\cong$ $k$-$G(H)$ $\cong$ $(d_1, ... , d_i)$-$PG(H)$.
			\end{thm}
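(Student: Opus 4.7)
The plan is to build the desired isomorphism coordinate by coordinate, using Theorem \ref{Tciso} on each cyclic factor $Z_{m_j}$. The key new ingredient is that the bijection produced in the proof of Theorem \ref{Tciso} is not merely a graph isomorphism, but is actually a semiconjugacy of multiplication maps; once that observation is in hand, the corresponding product bijection on $H$ carries one graph onto the other.

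First I would record two preliminaries. (i) For each $1 \le j \le i$ we have $d_j = \gcd(m, m_j) = \gcd(k, m_j)$, since $m_j \mid n$ forces $\gcd(k, m_j) = \gcd(\gcd(m,n), m_j) = \gcd(m, m_j)$. (ii) Because every prime factor of $n$ divides $m$ (the connectivity condition) and $m_j \mid n$, every prime factor of $m_j$ divides $d_j$. This puts us in the hypotheses of Theorem \ref{Tciso} applied to $Z_{m_j}$, both for the integer $m$ and for the integer $k$, with gcd equal to $d_j$ in each case. Revisiting the bijection $f$ produced in the proof of Theorem \ref{Tciso}, I would verify the stronger statement $f(d_j x) = m \cdot f(x)$ for every $x \in Z_{m_j}$; this is immediate from the two defining clauses $f(a) = a$ for $d_j \nmid a$ and $f(d_j^{\,i} a) = m^i a$ otherwise. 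Call the resulting bijection $f_j$, and construct analogously $g_j : Z_{m_j} \to Z_{m_j}$ satisfying $g_j(d_j x) = k \cdot g_j(x)$.

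Next I would set $F, G : H \to H$ by $F(x_1, \dots, x_i) = (f_1(x_1), \dots, f_i(x_i))$ and $G(x_1, \dots, x_i) = (g_1(x_1), \dots, g_i(x_i))$; both are bijections. To see that $F$ gives a graph isomorphism $(d_1, \dots, d_i)$-$PG(H) \to m$-$G(H)$, take an edge $\{x, y\}$ of $(d_1, \dots, d_i)$-$PG(H)$, say $y = (d_1 x_1, \dots, d_i x_i)$. Coordinate-wise semiconjugacy gives $F(y) = m \cdot F(x)$, so $\{F(x), F(y)\}$ is an edge in $m$-$G(H)$, and injectivity of $F$ handles the reverse implication. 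The identical argument with $G$ yields $(d_1, \dots, d_i)$-$PG(H) \cong k$-$G(H)$, and by transitivity $m$-$G(H) \cong k$-$G(H) \cong (d_1, \dots, d_i)$-$PG(H)$.

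I expect the main obstacle to be precisely this semiconjugacy upgrade of Theorem \ref{Tciso}: the earlier proof only records that $f$ sends edges to edges, so one must verify carefully from the explicit formulas, especially at elements of the form $d_j^{\,i} a$ with larger $i$, that $f$ genuinely intertwines the two multiplication actions rather than merely preserving their orbit structure. Everything else amounts to routine bookkeeping at the level of coordinates.
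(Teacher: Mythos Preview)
Your proposal is correct and follows essentially the same route as the paper: both build the isomorphism coordinatewise from the bijections $f_j$ of Theorem~\ref{Tciso} and then take the product map on $H$. Your explicit semiconjugacy formulation $f_j(d_j x)=m\cdot f_j(x)$ is in fact a bit cleaner than the paper's edge-by-edge phrasing, which appeals to ``$f_j(v_j)-f_j(w_j)$ is an edge in $d_j$-$G(Z_{m_j})$'' and is awkward in coordinates where $v_j=w_j$; the semiconjugacy is exactly what makes that step go through.
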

			\begin{proof}
				If $n \mid m$, then $k = n$ and the $m$-$G(S)$ $\cong$ $k$-$G(S)$ $\cong$ $(d_1, ... , d_i)$-$PG(S)$ $\cong$  $K_{1, n-1}$.  Hence, assume that $k \not = n$. We show  the $k$-$G(S)$ $\cong$ $(d_1, ... , d_i)$-$PG(S)$. For every $1 \leq j \leq i$, let $f_j : Z_{m_j} \rightarrow Z_{m_j}$ such that $f_j(0) = 0$. Let $a \in Z_{m_j} \setminus \{0\}$. If $d_j\nmid a$ (in $Z$), then define $f_j(a) = a$ and $f_j(k^wa) = d_j^wa \in Z_{m_j}$ for every $w \geq 1$. Then by the proof of Theorem \ref{Tciso}, each $f_j$, $1 \leq j \leq i$, is a bijective function. Let $f$: $S$  $\rightarrow$ $S$ and $a = (a_1, ... , a_i) \in S$. Define $f(a) = (f_1(a_1), ... , f_i(a_i))$. Since each $f_j$, $1 \leq j \leq i$ is bijective, $f$ is bijective.  Assume $v-w$ is an edge in the $k$-$G(S)$ for some $v = (v_1, ... , v_i), w = (w_1, ... , w_i) \in S$. Hence, for every $1\leq j \leq i$,  $f_j(v_j)-f_j(w_j)$ is an edge in $d_j$-$G(Z_{m_j})$ by Theorem \ref{Tciso}. Thus $f(v)-f(w)$ is an edge in $(d_1, ... , d_i)$-$PG(S)$. Hence, the $k$-$G(S)$ $\cong$ $(d_1, ... , d_i)$-$PG(S)$.  Using a similar argument, we conclude that the $m$-$G(S)$ $\cong$ $(d_1, ... , d_i)$-$PG(S)$. Thus, the $m$-$G(H)$ $\cong$ $k$-$G(H)$ $\cong$ $(d_1, ... , d_i)$-$PG(H)$.
			\end{proof}
			\begin{lem}\label{L3}
			Let $m_1, m_2, k$ be positive integers such that $m_1 \mid m_2$ and every prime factor of $m_2$ is a prime factor of $k$. Let $d_1 = gcd(m_1, k)$, $d_2 = gcd(m_2, k)$, $w_1$ be the smallest positive integer such that $m_1 \mid d_1^{w_1}$, and $w_2$ be the smallest positive integer such that $m_2 \mid d_2^{w_2}$. Then $d_1 \mid d_2$ and $w_1 \leq w_2$.
			\end{lem}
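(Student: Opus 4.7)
My plan is to split the statement into its two assertions and handle them separately. For $d_1 \mid d_2$, I would argue directly: $d_1 = \gcd(m_1, k)$ divides both $m_1$ and $k$; since $m_1 \mid m_2$ we get $d_1 \mid m_2$; combined with $d_1 \mid k$ this forces $d_1 \mid \gcd(m_2, k) = d_2$. This is routine manipulation of gcds and should take one line.

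The inequality $w_1 \leq w_2$ is where the hypotheses really do work, and my plan is to reduce both minimality conditions to divisibility by a common base $k$ by invoking Lemma \ref{L2}. I would first note that every prime factor of $m_1$ divides $m_2$ (since $m_1 \mid m_2$), and hence divides $k$ by hypothesis. Applying Lemma \ref{L2} once with $n = m_1$ and the lemma's ``$m$'' taken to be our $k$ (so the lemma's ``$k$'' becomes $\gcd(k, m_1) = d_1$), and again with $n = m_2$ in place of $m_1$ (giving $d_2$), and specializing to $a = 1$ in each application, yields the following: for every $w \geq 1$ and every $j \in \{1, 2\}$, we have $m_j \mid d_j^{\,w}$ if and only if $m_j \mid k^{w}$. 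Consequently $w_1$ is the least positive integer with $m_1 \mid k^{w_1}$ and $w_2$ is the least positive integer with $m_2 \mid k^{w_2}$.

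At this point the conclusion falls out immediately: $m_1 \mid m_2$ and $m_2 \mid k^{w_2}$ combine by transitivity into $m_1 \mid k^{w_2}$, whence by the minimality of $w_1$ we have $w_1 \leq w_2$. The only real obstacle is spotting that Lemma \ref{L2} is exactly the tool for translating divisibility by $d_j$ into divisibility by $k$, putting the two minimality conditions on the same footing so that the hypothesis $m_1 \mid m_2$ becomes directly usable; everything else is bookkeeping with divisors. An alternative, prime-by-prime argument comparing the $p$-adic valuations of $m_1, m_2, k$ in the three cases $a_p \leq c_p$, $c_p < a_p \leq b_p$, and the boundary case also works, but invoking Lemma \ref{L2} is both shorter and keeps the proof in the spirit of the surrounding material.
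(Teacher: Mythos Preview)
Your proof is correct. The argument for $d_1 \mid d_2$ is essentially the same as the paper's (both are one-line $\gcd$ manipulations), but for $w_1 \leq w_2$ you take a genuinely different route. The paper argues prime by prime: for each prime $p \mid m_1$ it compares the exact powers $p^a \mid\mid m_1$ and $p^b \mid\mid m_2$ and uses $m_2 \mid d_2^{w_2}$ to deduce $p^a \mid d_1^{w_2}$, concluding $m_1 \mid d_1^{w_2}$ and hence $w_1 \leq w_2$. Your approach instead invokes Lemma~\ref{L2} (with $a=1$) to translate both minimality conditions into the common form ``$w_j$ is the least integer with $m_j \mid k^{w_j}$,'' after which $m_1 \mid m_2 \mid k^{w_2}$ gives the result in one line. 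Your route is cleaner and better integrated with the surrounding material, since Lemma~\ref{L2} was proved precisely to handle this kind of passage between $d_j$ and $k$; the paper's direct valuation argument is more self-contained but slightly heavier in bookkeeping. (A pedantic note: Lemma~\ref{L2} is stated for $m,n \geq 2$, so strictly speaking you should dispose of the degenerate case $m_1 = 1$ separately, but that case is trivial since then $w_1 = 1 \leq w_2$.)
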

			\begin{proof}
			Since every prime factor of $m_2$ is a prime factor of $k$ and $m_1 \mid m_2$, every prime factor of $m_1$ is a prime factor of $k$ and $d_1 = gcd(m_1, k) = gcd(m_1, d_2)$. Thus $d_1 \mid d_2$. Let $p$ be a prime factor of $m_1$. Then $p$ is a prime factor of $d_1$, $d_2$, $m_2$, and there are positive integers $a, b \geq 1$ such that $p^a \mid\mid m_1$ and $p^b \mid\mid m_2$.  Hence $p^a \mid p^b \mid p^{w_2}$. Thus, $m_1 \mid d_1^{w_2}$.  Hence $w_1 \leq w_2$.
			\end{proof}
			\begin{thm}\label{NCDIM}
			Let $H$ be a non-cyclic abelian group of order $n >1$ such that $n$ is not a prime integer, i.e., $H \cong S = Z_{m_1} \oplus \cdots \oplus Z_{m_i}$ such that $i \geq 2$, $m_1 \geq 2$, $m_1 \mid m_2 \mid \cdots \mid m_i$, and $n = m_1\cdots m_i$ by Remark \ref{r1}. Let $m > 1$ be a positive integer such that every prime factor of $m$ is also a prime factor of $n$, and $k = gcd(m, n)$. For each $1\leq j \leq i$, let $d_j = gcd(m, n_j) = gcd(k, n_j)$, and $w_j$ be the least positive integer such that $m_j \mid d_j^{w_j}$. The following statements hold.

			\begin{enumerate}
				
				\item If $w_{i-1} = w_i$ or $diam$($d_i$-$G(Z_{m_i})$) = $2w_i$, then $diam$($m$-$G(H)$) = $diam$($k$-$G(H)$) = $dim$($(d_1, ... , d_i)$-$PG(H)$) = $w_{i-1} + w_i = 2w_i$.
				\item If $w_{i-1} = w_i - 1$ and $diam$($d_i$-$G(Z_{m_i})$) $\not = 2w_i$, then  $diam$($m$-$G(H)$) = $diam$($k$-$G(H)$) = $dim$($(d_1, ... , d_i)$-$PG(H)$) $= 2w_i - 1$.
				\item If $w_{i-1} = w_i - c$ for some integer $c \geq 2$, then  $diam$($m$-$G(H)$) = $diam$($k$-$G(H)$) = $dim$($(d_1, ... , d_i)$-$PG(H)$) $=$ $diam$($d_i$-$G(Z_{m_i})$).
			\end{enumerate}
			\end{thm}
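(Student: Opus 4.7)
The plan is to work exclusively in the product graph, since by Theorem \ref{TNCiso} we have $m$-$G(H)\cong k$-$G(H)\cong (d_1,\ldots,d_i)$-$PG(H)$, and by Theorem \ref{T5}(7) this is a tree rooted at $(0,\ldots,0)$ with parent map $(a_1,\ldots,a_i)\mapsto (d_1a_1,\ldots,d_ia_i)$. Distances are then governed by the formula $d(a,b)=\min\{s+t\mid d_j^sa_j=d_j^tb_j \text{ for every }j\}$, and the depth of $a$ equals $\max_j r_{a_j}$, where $r_{a_j}$ is the least $s\ge 0$ with $d_j^sa_j=0$ in $Z_{m_j}$. Since $w_1\le\cdots\le w_i$ by Lemma \ref{L3}, every depth is bounded by $w_i$, giving $d(a,b)\le 2w_i$ automatically.

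For the lower bound in each case I will produce an explicit diametral pair. In case (1) with $w_{i-1}=w_i$, I take $a$ supported only in coordinate $i-1$ by the generator $1$, and $b$ the analog in coordinate $i$; the meeting constraints force $s\ge w_{i-1}=w_i$ and $t\ge w_i$, yielding $d(a,b)=2w_i$. In case (1) with $diam$ of $d_i$-$G(Z_{m_i})$ equal to $2w_i$, I choose $a_i,b_i\in Z_{m_i}$ realizing that coordinate diameter and place them in coordinate $i$ only; projecting to coordinate $i$ shows $d(a,b)\ge 2w_i$. In case (2), using generators of coordinates $i$ and $i-1$ gives $d(a,b)=w_i+w_{i-1}=2w_i-1$. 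In case (3), the coordinate-$i$-only construction realizes $d(a,b)=diam(d_i$-$G(Z_{m_i}))$.

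For the upper bound, I analyze when the trivial bound $2w_i$ can be attained. An equality $d(a,b)=2w_i$ forces depth$(a)=$ depth$(b)=w_i$ and that the LCA of $a$ and $b$ in the tree is the root $(0,\ldots,0)$. By Lemma \ref{L3}, $w_j\le w_{i-1}<w_i$ for $j<i$ in cases (2) and (3), so depth $w_i$ must come from coordinate $i$: $r_{a_i}=r_{b_i}=w_i$. Moreover, the product-graph LCA being the root forces the coordinate-$i$ LCA of $a_i$ and $b_i$ to be $0$, whence $d(a_i,b_i)=2w_i$ in $d_i$-$G(Z_{m_i})$. When $diam(d_i$-$G(Z_{m_i}))<2w_i$ this is impossible, so $d(a,b)\le 2w_i-1$. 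Together with the lower bound, this settles case (2) and the subcase of case (3) in which the coordinate-$i$ diameter is $2w_i-1$.

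The main obstacle is the finer refinement in case (3) when $diam(d_i$-$G(Z_{m_i}))=2(w_i-1)$, which by Corollary \ref{C3}(4) forces $d_i=2$ and $m_i=2^{w_i}$. Here I must additionally rule out $d(a,b)=2w_i-1$. Such a distance requires, without loss of generality, depth$(a)=w_i$, depth$(b)=w_i-1$, and LCA equal to the root. Since $w_j\le w_{i-1}\le w_i-2$ for $j<i$, these depth conditions force $a_i$ to be odd in $Z_{2^{w_i}}$ and $b_i\equiv 2\pmod{4}$. Inspection of $2$-$G(Z_{2^{w_i}})$ shows that the paths to $0$ from any odd element and from such a $b_i$ both pass through $2^{w_i-1}$, so their coordinate-$i$ LCA equals $2^{w_i-1}\ne 0$. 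Because $w_i-1\ge w_j$ and $w_i-2\ge w_j$ for every $j<i$, this lifts to a product-graph LCA of $(0,\ldots,0,2^{w_i-1})\ne(0,\ldots,0)$, contradicting the assumption. Hence $d(a,b)\le 2(w_i-1)$, completing case (3).
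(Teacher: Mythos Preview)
Your proof is correct and follows the paper's overall strategy: pass to the product graph $(d_1,\ldots,d_i)$-$PG(H)$ via Theorem~\ref{TNCiso}, use the tree structure from Theorem~\ref{T5}(7), bound all depths by $w_i$ via Lemma~\ref{L3}, and exhibit explicit diametral pairs supported in coordinates $i-1$ and $i$ (or in coordinate $i$ alone). The lower-bound constructions are essentially those of the paper.

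Where your argument differs is in the upper bounds for cases (2) and (3). The paper, in case~(2), simply asserts that the exhibited path of length $2w_i-1$ is ``of maximum length'' without justification; in case~(3) it only bounds $d(v,u)$ for pairs in which $v$ has last coordinate $0$, leaving untreated the situation in which both endpoints have nonzero $i$-th coordinate. Your LCA analysis fills these gaps cleanly: showing that $d(a,b)=2w_i$ forces $r_{a_i}=r_{b_i}=w_i$ and then that a nontrivial coordinate-$i$ LCA would lift (using $w_i-1\ge w_{i-1}\ge w_j$ for $j<i$) to a nontrivial product-graph LCA yields the contradiction with $\operatorname{diam}(d_i\text{-}G(Z_{m_i}))<2w_i$. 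Likewise, your further subcase analysis in (3) when $d_i=2$, $m_i=2^{w_i}$ --- observing that every odd element and every element $\equiv 2\pmod 4$ route through $2^{w_i-1}$, and that $w_i-1,\,w_i-2\ge w_{i-1}$ kills the other coordinates --- rules out $2w_i-1$ and is a genuine refinement absent from the paper's proof. In short: same skeleton, but your LCA/projection technique supplies the missing upper-bound arguments that the paper leaves implicit or incomplete.
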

			\begin{proof}
			First, observe that the $k$-$G(H)$ $\cong$ $(d_1, ... , d_i)$-$PG(H)$ is a tree. For each $1\leq j \leq i$, let $d_j(a, 0)$ be the distance between $a \in Z_{m_j} \setminus \{0\}$ and 0 in $d_j$-$G(Z_{m_j}$). Hence $d_j(1, 0) = w_j$ and $d_j(a, 0) \leq w_j$ for every $a\in Z_{m_j} \setminus \{0\}$ by Theorem \ref{D1}. Since $w_1 \leq \cdots \leq w_i$ for every $1 \leq j \leq i$ by Lemma \ref{L3}, we conclude that $d(v=(v_1, ... , v_i), (0, .., 0)) \leq w_i$.  Let $v, w \in S\setminus \{(0, ..., 0)\}$. Then $d(v, w) \leq d(v, (0, ..., 0)) + d(w, (0, ... , 0)) \leq w_i + w_i = 2w_i$. Thus $diam$($m$-$G(H)$) =$diam$($k$-$G(H)$) = $dim$($(d_1, ... , d_i)$-$PG(H)$) $\leq$  $2w_i$. We consider the following cases. {\bf Case one}: Assume that $w_{i-1} = w_i$ or $diam$($d_i$-$G(Z_{m_i})$) = $2w_i$. If $diam$($d_i$-$G(Z_{m_i})$) = $2w_i$, then it is clear that $diam$($m$-$G(H)$) = $diam$($k$-$G(H)$) = $dim$($(d_1, ... , d_i)$-$PG(H)$) =  $2w_i$. Assume that $w_{i-1} = w_i$. Let $v = (0, ... , 1, 0)$, $u = (0, ..., 0, 1)$, and $O = (0, ... , 0)$. Then $d_{i-1}(v, O) = w_{i-1} = w_i$. Since $w_1 \leq \cdots \leq w_{i-1} \leq w_i$ by Lemma \ref{L3}, we have  $v-vd_{i-1}-\cdots - vd_{i-1}^{w_i}=O=ud_i^{w_i}-ud_i^{w_i - 1}-\cdots -ud_i-u$ is a path of length $2w_i$ in $(d_1, ... , d_i)$-$PG(H)$. Thus $diam$($m$-$G(H)$) = $diam$($k$-$G(H)$) = $dim$($(d_1, ... , d_i)$-$PG(H)$) =  $2w_i$. {\bf Case two}: Assume that $w_{i-1} = w_i - 1$ and $diam$($d_i$-$G(Z_{m_i})$) $\not = 2w_i$. Let $v, u, O$ be as in case one. Since $w_1 \leq \cdots \leq w_{i-1} \leq w_i$ by Lemma \ref{L3}, we have $v-vd_{i-1}-\cdots - vd_{i-1}^{w_i-1}= O = ud_i^{w_i}-ud_i^{w_i - 1}-\cdots -ud_i-u$ is a path of maximum length in $(d_1, ... , d_i)$-$PG(H)$. Hence $diam$($m$-$G(H)$) = $diam$($k$-$G(H)$) = $dim$($(d_1, ... , d_i)$-$PG(H)$) =  $2w_i - 1$. {\bf Case three}. Assume that $w_{i-1} = w_i - c$ for some integer $c \geq 2$. Then $Z_{m_i}$ $\not = Z_2$. Thus $m_i > 2$. Hence $2(w_i - 1)$ $\leq$ $diam$($d_i$-$G(Z_{m_i})$ $\leq 2w_i$ by Theorem \ref{CDIM}. Let $v = (v_1, , v_{i-1}, 0)$ and $u = (u_1, ... , u_i)$ be distinct nonzero elements in $S$, and $O = (0, ..., 0)$. Since $w_{i-1} = w_i - c$, $d(v, u) \leq d(v, O) + d(O, u) \leq  w - c + w \leq w-2 + w = 2(w-1)$ $\leq$ $diam$($d_i$-$G(Z_{m_i})$. Thus, a path of maximum length in $(d_1, ... , d_i)$-$PG(H)$ is of length $diam$($d_i$-$G(Z_{m_i})$. Hence $diam$($m$-$G(H)$) = $diam$($k$-$G(H)$) = $dim$($(d_1, ... , d_i)$-$PG(H)$) =  $diam$($d_i$-$G(Z_{m_i})$).
			
			\end{proof}
			We have the following examples.
			\begin{exa}
			Let $H = Z_4\oplus Z_{8}\oplus Z_{72}$, and $k = 6$. Then the $6$-$G(H)$ is connected by Theorem \ref{T1}. Let $m_1 = 4$, $m_2 = 8$, $m_3 = 72$, and $d_j = gcd(m_j, 6)$, for each $1\leq j \leq 3$. Then $d_1 = 2$, $d_2 = 2$, and $d_3 = 6$. For every $1\leq j \leq 3$, let $w_j$ be the least positive integer such that $m_j \mid d_j^{w_j}$. Then $w_1 = 2$, $w_2 = 3$, and $w_3 = 3$. Since $72 = 2k^2$, we have $diam$($d_3$-$G(Z_{72})$) = $2w_3 - 1 = 5$ $\not = 2w_3$ by Theorem \ref{CDIM}(4). Since $w_2 = w_3 = 3$, we conclude that $diam$($k$-$G(H)$) = $dim$($(d_1, d_2, d_3)$-$PG(H)$) = $2w_3 = 6$ by Theorem \ref{NCDIM}(1). Note that $deg((0, 0, 0)) = d_1d_2d_3 - 1 = 23$ by Theorem \ref{T5}(2), and the $k$-$G(H)$ has exactly $95$ element in $H$ of degree $d_1d_2d_3 + 1 = 25$ by Theorem \ref{T5}(5).  
					
			\end{exa}
			\begin{exa}
				Let $H = Z_8\oplus Z_{16}$, and $k = 2$. Then the $2$-$G(H)$ is connected by Theorem \ref{T1}. Let $m_1 = 8$, $m_2 = 16$, and $d_j = gcd(m_j, 2)$, for each $1\leq j \leq 2$. Then $d_1 = 2$, and $d_2 = 2$. For every $1\leq j \leq 2$, let $w_j$ be the least positive integer such that $m_j \mid d_j^{w_j}$. Then $w_1 = 3$, and $w_2 = 4$. Thus,  $diam$($d_2$-$G(Z_{16})$) = $2(w_2 - 1) = 6$ $\not = 2w_2$ by Theorem \ref{CDIM}(3). Since $w_1 = w_2 - 1 = 3$, we conclude that $diam$($k$-$G(H)$) = $dim$($(d_1, d_2)$-$PG(H)$) = $2w_2 - 1 = 7$ by Theorem \ref{NCDIM}(2). Note that $deg((0, 0)) = d_1d_2 - 1 = 3$ by Theorem \ref{T5}(2), and the $k$-$G(H)$ has exactly $31$ element in $H$ of degree $d_1d_2 + 1 = 5$ by Theorem \ref{T5}(5).  
				
			\end{exa}
			\begin{exa}
				Let $H = Z_4\oplus Z_{128}$, and $k = 4$. Then the $4$-$G(H)$ is connected by Theorem \ref{T1}. Let $m_1 = 4$, $m_2 = 128$, and $d_j = gcd(m_j, 4)$, for each $1\leq j \leq 2$. Then $d_1 = 4$, and $d_2 = 4$. For every $1\leq j \leq 2$, let $w_j$ be the least positive integer such that $m_j \mid d_j^{w_j}$. Then $w_1 = 1$, and $w_2 = 4$. Since $128 = 2k^3$, we have $diam$($d_2$-$G(Z_{128})$) = $2w_2 - 1 = 7$ $\not = 2w_3$ by Theorem \ref{CDIM}(4). Since $w_1 = w_2 - 3 = 1$, we conclude that $diam$($k$-$G(H)$) = $dim$($(d_1, d_2)$-$PG(H)$) = $diam$($d_2$-$G(Z_{128})$) = $2w_2 - 1 = 7$ by Theorem \ref{NCDIM}(3). Note that $deg((0, 0)) = d_1d_2 - 1 = 15$ by Theorem \ref{T5}(2), and the $k$-$G(H)$ has exactly $31$ element in $H$ of degree $d_1d_2 + 1 = 17$ by Theorem \ref{T5}(5).  
				
			\end{exa}
\vskip0.2in
	{\bf Acknowledgment}: The author is supported by the American University of Sharjah Research Fund FRG-21 AS1617.


\begin{thebibliography}{99}
	
	
	\bibitem{A1} Abawajy, J., Kelarev, A., Chowdhury, M. (2013). Power graphs:	A survey. Ejgta. 1(2): 125-147.
	
	\bibitem{AS} Anderson, D. F, Al-Kaseasbeh, S. (2023). The intersection subgroup graph of a group. {\it Commun. Algebra}. 51(8): 3556-3573.
	\bibitem{B1} Anderson, D. F., Asir, T., A. Badawi, A., Tamizh Chelvam, T. (2021). Graphs from rings. Springer Nature, Cham, Switzerland, 1 edition.
		
\bibitem{C0} Kumar, A., Selvaganesh, L., Cameron, P. J., Tamizh Chelvam, T.(2021). Recent developments on the power graph of finite groups – a survey. {\it AKCE International Journal of Graphs and Combinatorics}. 18(2):65-94.
\bibitem{C1} Chakrabarty, I., Ghosh, S., Sen, M. K. (2009). Undirected power graphs of semigroups. {\it Semigroup Forum}. 78(3): 410-426.


\bibitem{K1} Kelarev, A. V, Quinn, S. J. (2000). A combinatorial property
and power graphs of groups. {\it Contrib. General Algebra}. 12(58):
3-6.
\bibitem{K2} Kelarev, A. V., Quinn, S. J. (2002). Directed graphs and combinatorial properties of semigroups. {\it J. Algebra}. 251(1): 16-26.

	\bibitem{P1} Panda, R. P., Krishna K. V. (2018). On connectedness of power graphs of finite groups. {\it J. Algebra Appl.} 17(10): 1850184.
	
\bibitem{P2} Panda, R. P., Krishna, K. V. (2018). On the minimum degree,
	edge-connectivity and connectivity of power graphs of finite
	groups. {\it Commun. Algebra}. 46(7): 3182-3197.
	\bibitem{P3} Panda, R. P., Patra, K. L., Sahoo, B. K. (2021). On the minimum degree of the power graph of a finite cyclic group. {\it J. Algebra Appl.} 20(03): 2150044.
	\bibitem{P4} Pourghobadi, K, Jafari, S. H. (2018). The diameter of proper power graphs of symmetric groups. {\it J. Algebra Appl.} 17(12): 1850234.
	
	\bibitem{T1} Tamizh Chelvam, T., Sattanathan, M. (2013). Power graph of finite abelian groups. {\it Algebra Discret. Math}. 16: 33-41.

	
\end{thebibliography}
\end{document}